\newtheorem{Thm}{Theorem}
\newtheorem{Lem}{Lemma}
\newtheorem{Ass}{Assumption}
\newtheorem{Rem}{Remark}
\newtheorem{Cor}{Corollary}
\title{\LARGE \bf  


On the Computation-Communication Trade-Off with A Flexible\\ Gradient Tracking Approach
}
\author{Yan Huang$^\dagger$ and Jinming Xu$^\dagger$ 
\thanks{This manuscript was submitted to the 62nd IEEE CDC in March 2023.}%
\thanks{$^\dagger$Yan Huang and Jinming Xu are with College of Control Science and Engineering, Zhejiang University, Hangzhou, China. E-mails: {\tt\small \{huangyan5616, jimmyxu\}@zju.edu.cn} }%
}%
\begin{document}

\maketitle

\begin{abstract}
We propose a flexible gradient tracking  approach with adjustable computation and communication steps for solving distributed stochastic optimization problem over networks. The proposed method allows each node to perform multiple local gradient updates and multiple inter-node communications in each round, aiming to strike a balance between computation and communication costs according to the properties of objective functions and network topology in non-i.i.d. settings. 
Leveraging a properly designed Lyapunov function, we derive both the computation and communication complexities for achieving arbitrary accuracy on smooth and strongly convex objective functions. Our analysis demonstrates sharp dependence of the convergence performance on graph topology and properties of objective functions, highlighting the trade-off between computation and communication. Numerical experiments are conducted to validate our theoretical findings.
\end{abstract}

\section{Introduction}

With the proliferation of individual computing devices and local collected user data \cite{konevcny2016federated}, distributed optimization methods have become increasingly popular in recent years due to their wide applications in various fields such as cooperative control \cite{nedic2018distributed}, distributed sensing \cite{akyildiz2011cooperative}, large-scale machine learning \cite{tsianos2012consensus}, and just to name a few. In this paper, we consider the standard distributed stochastic optimization problem jointly solved by a number of $n$ nodes over a network:
\begin{equation}\label{Prob} 
\underset{x\in \mathbb{R} ^p}{\min}f\left( x \right) =\frac{1}{n}\sum_{i=1}^n{\underset{:=f_i\left( x \right)}{\underbrace{\mathbb{E} _{\xi _i\sim \mathcal{D} _i}\left[ f_i\left( x;\xi _i \right) \right] }}},
\end{equation}
where $x\in \mathbb{R} ^p$ is the global decision variable and the objective function $f: \mathbb{R} ^p\rightarrow \mathbb{R}$ is the sum-utility of $n$ local objective function $f_i$ conditioned on the local data sample $\xi _i$ with distribution $\mathcal{D} _i$. As a promising approach, parallel/decentralized stochastic gradient decent \cite{mcmahan2017communication, lian2017can} is shown to be a simple yet efficient algorithm for solving the above distributed stochastic optimization problem \eqref{Prob} under certain scenarios. However, parallel/decentralized SGD may not ensure good performance in the presence of node-specific heterogeneity arising from {imbalanced} data sets, communication and computing resources \cite{wang2021field}.

To avoid high communication burden among nodes, plenty of communication-efficient methods have been studied in the optimization and machine learning community. 
Particularly, Federated Averaging (FedAvg) \cite{mcmahan2017communication} (a.k.a. Local SGD \cite{stich2018local}), as a variant of parallel SGD with parameter server (PS) architecture \cite{dean2012large}, has been widely used in federated learning, which executes multiple local updates between two consecutive communication steps with partial or full node participation to save communication cost. The effectiveness of FedAvg/Local SGD for independent and identically distributed (i.i.d.) datasets has been extensively studied in the literature \cite{stich2018local, Lin2020Don't, woodworth2020local, koloskova2020unified}. For instance, it has been shown in \cite{Lin2020Don't, woodworth2020local} that Local SGD can outperform centralized mini-batch SGD \cite{bottou2010large} for quadratic objectives and certain convex cases. We note that these methods with PS architecture all require a central server for data aggregation, which may suffer from single-point failure and communication bottleneck \cite{lian2017can}. To address this issue, many decentralized SGD methods have been proposed for solving Problem \eqref{Prob} over peer-to-peer networks \cite{ram2009asynchronous, lian2017can}. In general, gossip-based communication protocols \cite{boyd2006randomized, lu2011gossip} are popular choices for distributed algorithms. For example, Lian \textit{et al.} \cite{lian2017can} proposed D-PSGD where each node communicates only with its neighbouring nodes for reaching consensus on optimization process, and, in \cite{lian2018asynchronous, ying2021exponential}, only {a subset} of the nodes are activated in each round {for exchanging information}, thus reducing communication costs.

While communication cost is a key concern in distributed optimization, it is equally important to ensure that the accuracy of the algorithm is not significantly compromised in practical scenarios. In particular, when it comes to non-i.i.d. settings where data distribution of nodes are heterogeneous, the adoption of local updates, partial participation and gossip protocols in parallel/decentralized SGD methods will introduce more degrees of data heterogeneity yielding poor algorithmic performance \cite{Li2020On}, and thus many variants have been proposed to address this issue. For instance, gradient estimation techniques \cite{ karimireddy2020scaffold, pu2021distributed, huang2022tackling, nguyen2022performance, huang2022stochastic, liu2023decentralized} and primal-dual-like methods \cite{yuan2018exact, yuan2020influence} have been shown to be effective for tackling data heterogeneity among nodes. In particular, Pu \textit{et al.} \cite{pu2021distributed} proposed a distributed stochastic gradient tracking (DSGT) method by introducing an auxiliary variable for each node to track the average gradient of local functions. To further reduce the communication cost, Nguyen \textit{et al.} \cite{nguyen2022performance} proposed a variant of DSGT, termed LU-GT, employing multiple local updates, and they provided the communication complexity matching Local SGD for non-convex objective functions. Building on this, Liu \textit{{et al.}} \cite{liu2023decentralized} proposed another variant adopting gradient-sum-tracking that further reduces the communication complexity with reduced stochastic gradient variance.
However, both the results in \cite{nguyen2022performance, liu2023decentralized} ignore the side effect of local updates that will amplify stochastic gradient noise on computation complexity.
The readers are referred to the recent survey papers \cite{tang2020communication, cao2023communication} for many other efforts to improve communication efficiency.

However, most existing distributed algorithms mainly focus on reducing communication costs without taking into account the acceleration of computation processes. To account for both computation and communication complexity, Berahas \textit{et al.} \cite{berahas2018balancing} proposed a variant of deterministic gradient decent method with multiple communication steps at each iteration, named NEAR-DGD, and they evaluated the performance of the algorithm via a new metric accounting for both communication and computation complexity, and showed that employing multiple consensus steps is desirable when communication is relatively cheap. Building on this, a stochastic variant called S-NEAR-DGD is proposed in \cite{iakovidou2022s} to accelerate the computation process. More recently, Liu \textit{et al.} \cite{liu2022decentralized} proposed a decentralized federated learning algorithm, named DFL, that employs multiple local updates and inter-node communication during each round and analyzed the impact of communication and computation on the performance of the algorithm separately.
Although these aforementioned algorithms are theoretically guaranteed for certain scenarios, their applications are limited by the assumption of uniformly bounded gradients. Moreover, they tend to be effective only with i.i.d. datasets and may face challenges with data heterogeneity in non-i.i.d. settings.

In this paper, we propose and analyze a flexible gradient tracking approach that employs adjustable communication and computation protocols for solving problem \eqref{Prob} in non-i.i.d. scenarios. 
The main contributions are summarized as: \textbf{i)} We develop a flexible gradient tracking method (termed FlexGT) employing multiple local updates and multiple inter-node communication, which enables it to deal with data heterogeneity and design customized protocols to balance communication and computation costs according to the properties of objective functions and network topology; \textbf{ii)} Our proposed algorithm comes with theoretical guarantees, including both communication and computation complexity analysis for strongly convex and smooth objective functions. In particular, we show that the proposed FlexGT algorithm  converges linearly to a neighborhood of the optimal solution regardless of data heterogeneity, which recovers the best known result of DSGT \cite{koloskova2021improved} as a special case under our settings. 
More importantly, the complexity results shed lights on adjusting the communication and computation frequency according to the problem characteristics to achieve a better trade-off. This is in contrast to the existing works \cite{iakovidou2022s, liu2022decentralized} that merely focus on communication or computation.

\textbf{Notations.} Throughout this paper, we adopt the following notations: $\left| \cdot \right|$ represents the Frobenius norm, $\mathbb{E}\left[ \cdot \right]$ denotes the expectation of a matrix or vector, $\mathbf{1}$ represents the all-ones vector, $\mathbf{I}$ denotes the identity matrix, and $\mathbf{J}=\mathbf{1}\mathbf{1}^T/n$ denotes the averaging matrix. 

\section{Problem Formulation and Algorithm Design}
We consider solving the distributed stochastic optimization problem \eqref{Prob} where $n$ agents are connected over a graph $\mathcal{G}=(\mathcal{V},\mathcal{E})$, Here, $\mathcal{V}=\{1,2,...,n\}$ represents the set of agents, and $\mathcal{E}\subseteq\mathcal{V}\times \mathcal{V}$ denotes the set of edges consisting of ordered pairs $(i,j)$ representing the communication link from node $j$ to node $i$. For node $i$, we define $\mathcal{N}_i= \{ j,|, \left( i, j \right) \in \mathcal{E} \}$ as the set of its neighboring nodes. We then make the following blanket assumptions on the objective function and graph.

\begin{Ass}\label{Ass_joint_smo_cov}
({\textbf{Convexity and smoothness}})
Each $f_i\left( x\right) $ is $\mu$-strongly convex and $L$-smooth in $x$.
\end{Ass}

Then, we assume each agent $i$ obtains an unbiased noisy gradient of the form $\nabla f_i\left( x; \xi _i \right) $ by querying a stochastic oracle ($\mathcal{S}\mathcal{O}$), which satisfies the following assumption.

\begin{Ass}\label{Ass_bounded_var}
(\textbf{Bounded variance})
For $\forall x, x^\prime \in \mathbb{R}^p$, there exist $\sigma  \geqslant 0$ such that
\begin{equation*}
\begin{aligned}
\mathbb{E} \left[ \left\| \nabla f_i\left( x;\xi _i \right) -\nabla f_i\left( x \right) \right\| ^2 \right] \leqslant \sigma ^2,
\end{aligned}
\end{equation*}
where the random sample $\xi_{i}$ is generated from $\mathcal{S}\mathcal{O}$.
\end{Ass}

\begin{Ass} \label{Ass_graph}
(\textbf{Graph connectivity})
The weight matrix $W$ induced by graph $\mathcal{G}$ is doubly stochastic, i.e., $W\mathbf{1}=\mathbf{1},\mathbf{1}^TW=\mathbf{1}^T$ and $\rho _W:= \left\| W-\mathbf{J} \right\|_2 ^2 <1.$

\end{Ass}

Now, we proceed to present the proposed FlexGT algorithm for solving problem (\ref{Prob}), which is given in Algorithm~\ref{Alg_Flexible_DSGT}.

\begin{algorithm}
\caption{\textbf{FlexGT}}
\begin{algorithmic}[1]\label{Alg_Flexible_DSGT}
\renewcommand{\algorithmicrequire}{\textbf{Initialization: }}
\REQUIRE {Initial points $x_{i,0}\in \mathbb{R}^p$ and $y_{i,0}=\nabla_x f_i\left( x_{i,0};\xi _{i,0} \right)$, communication and computation frequency $d_1, d_2 \geq 1$ and stepsize $\gamma >0$.
}
\
\FOR{round $k = 0,1,\cdots$, each node $i\in[n]$,}


\FOR{$l=0,1,\cdots,d_2-1$}
\STATE {Obtain an unbiased noisy gradient sample of $\xi_{i,d_2k+l+1}$ by querying the stochastic oracle $\mathcal{S}\mathcal{O}$}.

\STATE Perform local update:
\begin{equation*}
\begin{aligned}
x_{i,d_2k+l+1}&=x_{i,d_2k+l}-\gamma y_{i,d_2k+l},
\\
y_{i,d_2k+l+1}&=\nabla f_i\left( x_{i,d_2k+l+1}; \xi _{i,d_2k+l+1} \right) 
\\
&+y_{i,d_2k+l}-\nabla f_i\left( x_{i,d_2k+l}; \xi _{i,d_2k+l} \right) .
\end{aligned}
\end{equation*}
\ENDFOR
\FOR{$s = 0,1,\cdots,d_1-1$}
\STATE Perform inter-node communication:
\begin{equation*}
\begin{aligned}
x_{i,d_2\left( k+1 \right)}&=\sum_{j\in \mathcal{N} _i}{W_{i,j}x_{j,d_2\left( k+1 \right)}},
\\
y_{i,d_2\left( k+1 \right)}&=\sum_{j\in \mathcal{N} _i}{W_{i,j}y_{j,d_2\left( k+1 \right)}}.
\end{aligned}
\end{equation*}
\ENDFOR

\ENDFOR
\end{algorithmic}
\end{algorithm}

For simplicity, we introduce the following notations:
\begin{equation*}
\begin{aligned}
X_t&:=\left[ x_{1,t},x_{2,t},\cdots ,x_{n,t} \right] ^T\in \mathbb{R}^{n\times p},
\\
Y_t&:=\left[ y_{1,t},y_{2,t},\cdots ,y_{n,t} \right] ^T\in \mathbb{R}^{n\times p},
\\
\nabla G_t&:=\left[ \cdots ,\nabla f_i\left( x_{i,t};\xi _{i,t} \right) ,\cdots \right] ^T\in \mathbb{R} ^{n\times p},
\\
\nabla F_t&:=\mathbb{E} \left[ \nabla G_t \right] =\left[ \cdots ,\nabla f_i\left( x_{i,t} \right) ,\cdots \right] ^T\in \mathbb{R} ^{n\times p}.
\end{aligned}
\end{equation*}
Then, Algorithm~\ref{Alg_Flexible_DSGT} can be rewritten in a compact form:
\begin{subequations}\label{Compact_PerDSGT}
\begin{align}
X_{d_2\left( k+1 \right)}&=W^{d_1}\left( X_{d_2k}-\gamma \sum_{j=0}^{d_2-1}{Y_{d_2k+j}} \right) ,\label{Eq_x_update}
\\
Y_{d_2\left( k+1 \right)}&=W^{d_1}\left( Y_{d_2k}+\nabla G_{d_2\left( k+1 \right)}-\nabla G_{d_2k} \right),\label{Eq_y_update}
\end{align}
\end{subequations}
where the integers $d_1, d_2 \in \left[ 1,\infty \right) $ denote the communication and computation frequency in each round respectively.

\begin{Rem}
The flexibility of the proposed FlexGT algorithm consists in the adjustable communication and computation protocol with respect to $d_1$ and $d_2$, which allows for customized multiple local updates and message exchange over the network in each round according to the properties of objective functions and network topology. This algorithm can also eliminate the effect of data heterogeneity among nodes thanks to the gradient tracking scheme. It should be also noted that FlexGT recovers the standard DSGT \cite{pu2021distributed} algorithm ($d_1=d_2=1$) and LU-GT \cite{nguyen2022performance} ($d_1=1, d_2\geqslant1$) as special cases, and has the potential to recover S-NEAR-DGD \cite{iakovidou2022s} and DFL \cite{liu2022decentralized} as well if independent weight matrix for the update of $Y$ is employed. 
\end{Rem}

\section{Main Results}

In this section, we present the main convergence results of
the proposed FlexGT algorithm for strongly-convex and smooth objective functions. To this end, we first define the following Lyapunov function consisting of optimality gap, consensus error and gradient tracking error:
\begin{equation}\label{Def_Lyapunov_func}
\begin{aligned}
V_t:=\left\| \bar{x}_t-x^* \right\| ^2+c_1\left\| X_t-\mathbf{1}\bar{x}_t \right\| ^2+c_2\left\| Y_t-\mathbf{1}\bar{y}_t \right\| ^2,
\end{aligned}
\end{equation}
where $t=d_2k$, $c_1$ and $c_2$ are coefficients to be properly designed later, and
\begin{equation*}
\begin{aligned}
\bar{x}_t:=\frac{\mathbf{1}^T}{n}X_t,\quad \bar{y}_t:=\frac{\mathbf{1}^T}{n}Y_t.
\end{aligned}
\end{equation*}

With these definitions, we are ready to present the main convergence results of the proposed FlexGT algorithm.

\begin{Thm}\label{Thm_periodical}
Suppose the Assumptions \ref{Ass_joint_smo_cov}, \ref{Ass_bounded_var} and \ref{Ass_graph} hold. Let the stepsize satisfy
\begin{equation}\label{Eq_stepsize_1}
\begin{aligned}
\gamma \leqslant \min \left\{ \frac{1}{10d_2L},\frac{1-\rho _{W}^{d_1}}{37d_2L\sqrt[4]{\rho _{W}^{d_1}}}, \frac{\left( 1-\rho _{W}^{d_1} \right) ^2}{153d_2L\sqrt{\rho _{W}^{d_1}}} \right\}.
\end{aligned}
\end{equation}
Then, we have for all $k\geqslant0$,
\begin{equation}\label{Eq_Thm_stochastic}
\begin{aligned}
\mathbb{E} \left[ V_{d_2\left( k+1 \right)} \right] 
&\leqslant \left( 1-\min \left\{ \frac{\mu d_2 \gamma}{4}, \frac{1-\rho _{W}^{d_1}}{8} \right\} \right) \mathbb{E} \left[ V_{d_2k} \right] 
\\
&+\frac{d_2\gamma ^2\sigma ^2}{n}+\frac{d_{2}^{3}\gamma ^3L}{\left( 1-\rho _{W}^{d_1} \right) ^3}M_{\sigma},
\end{aligned}
\end{equation}
where $M_{\sigma}$ can be found in \eqref{Def_M_sigma}.
\end{Thm}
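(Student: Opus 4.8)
The plan is to establish a one-round recursion for the Lyapunov function $V_t$ by separately bounding the three constituent terms --- the optimality gap $\|\bar{x}_{d_2(k+1)}-x^*\|^2$, the consensus error $\|X_{d_2(k+1)}-\mathbf{1}\bar{x}_{d_2(k+1)}\|^2$, and the gradient-tracking error $\|Y_{d_2(k+1)}-\mathbf{1}\bar{y}_{d_2(k+1)}\|^2$ --- and then combining them with suitably chosen weights $c_1,c_2$ so that all cross-terms are absorbed. First I would record the key structural facts that make gradient tracking work: since $W$ is doubly stochastic, averaging annihilates the communication operator, so $\bar{x}_{d_2(k+1)} = \bar{x}_{d_2 k} - \gamma \sum_{j=0}^{d_2-1}\bar{y}_{d_2k+j}$ and, crucially, $\bar{y}_t = \frac{1}{n}\mathbf{1}^T\nabla G_t$ remains the exact average of the local stochastic gradients for all $t$ (this follows inductively from \eqref{Eq_y_update} and the initialization $y_{i,0}=\nabla f_i(x_{i,0};\xi_{i,0})$). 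Hence the mean iterate performs, in expectation, $d_2$ steps of an inexact gradient method on $f$, where the inexactness comes only from the consensus drift of the inner iterates $x_{i,d_2k+j}$ away from $\bar{x}_{d_2k}$ and from the stochastic noise.

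Next I would bound the optimality gap. Using $\mu$-strong convexity and $L$-smoothness (Assumption~\ref{Ass_joint_smo_cov}) together with the stepsize bound $\gamma \le \frac{1}{10d_2 L}$, a standard descent argument on the $d_2$-fold mean update gives a contraction factor of roughly $(1-\tfrac{\mu d_2\gamma}{2})$ on $\|\bar{x}_{d_2k}-x^*\|^2$, plus an error term proportional to the accumulated consensus error $\sum_{j=0}^{d_2-1}\|X_{d_2k+j}-\mathbf{1}\bar{x}_{d_2k+j}\|^2$ and a variance term $\tfrac{d_2\gamma^2\sigma^2}{n}$ from averaging $n d_2$ independent noise samples (Assumption~\ref{Ass_bounded_var}). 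For the consensus error, I would apply the contraction $\|W^{d_1}Z - \mathbf{1}\bar z\|^2 \le \rho_W^{d_1}\|Z-\mathbf{1}\bar z\|^2$ to \eqref{Eq_x_update}, which yields a factor $\rho_W^{d_1}$ (hence a rate $1-\rho_W^{d_1}$ after using Young's inequality to split off the $\gamma\sum Y$ term), at the cost of a term $\lesssim \tfrac{\gamma^2 d_2}{1-\rho_W^{d_1}}\sum_j\|Y_{d_2k+j}-\mathbf{1}\bar y_{d_2k+j}\|^2$ plus lower-order noise. Symmetrically, applying the contraction to \eqref{Eq_y_update} bounds the tracking error; here the new ingredient is controlling $\|\nabla G_{d_2(k+1)}-\nabla G_{d_2k}\|^2$, which I would expand via $L$-smoothness into the displacement $\|X_{d_2(k+1)}-X_{d_2k}\|^2$ (itself bounded using the $x$-update in terms of $\gamma^2 d_2\sum_j\|Y_{d_2k+j}\|^2$) plus $2\sigma^2$-type stochastic terms, and then further decompose $\|Y\|^2$ into its mean part (controlled by gradient norms, hence by the optimality gap and consensus error through smoothness) and its deviation part.

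The final step is to chain the three inequalities with weights $c_1,c_2$ chosen so that: the $c_1$-weighted consensus-error feedback into the optimality-gap bound is dominated by the $\tfrac{\mu d_2\gamma}{4}$ slack; the $c_2$-weighted tracking-error feedback into the consensus bound is dominated by the $\tfrac{1-\rho_W^{d_1}}{8}$ slack; and the tracking-error recursion's dependence on the other two terms is in turn absorbed. This is exactly where the three-way stepsize condition \eqref{Eq_stepsize_1} is used --- the $\tfrac{(1-\rho_W^{d_1})^2}{153 d_2 L\sqrt{\rho_W^{d_1}}}$ and $\tfrac{1-\rho_W^{d_1}}{37 d_2 L \sqrt[4]{\rho_W^{d_1}}}$ branches are precisely what make the $\gamma$-dependent cross-coupling small enough relative to the contraction gaps $\rho_W^{d_1}$ (and their fourth/square roots appear because the coupling constants $c_1,c_2$ themselves scale with powers of $1-\rho_W^{d_1}$). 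Collecting the residual stochastic terms gives $\tfrac{d_2\gamma^2\sigma^2}{n}$ from the mean recursion and a higher-order $\tfrac{d_2^3\gamma^3 L}{(1-\rho_W^{d_1})^3}M_\sigma$ from the consensus and tracking recursions. I expect the main obstacle to be the bookkeeping in this last combination: one must track how every cross-term scales in $\gamma$, $d_2$, $L$, and $1-\rho_W^{d_1}$, verify that the chosen $c_1,c_2$ (likely of order $\tfrac{1-\rho_W^{d_1}}{d_2 L}$ and a small multiple thereof) simultaneously satisfy all the absorption inequalities, and confirm that the inner-iterate consensus errors $\|X_{d_2k+j}-\mathbf{1}\bar x_{d_2k+j}\|^2$ for $0<j<d_2$ --- which are never contracted by communication within a round --- can be bounded by $\|X_{d_2k}-\mathbf{1}\bar x_{d_2k}\|^2$ plus $\gamma$-small terms, a step that relies heavily on $\gamma d_2 L$ being small.
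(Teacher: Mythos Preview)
Your plan is essentially the paper's proof: it establishes exactly the three per-round lemmas you describe (optimality gap, consensus error, tracking error), each relying on inner-loop ``client-divergence'' bounds for $\|X_{d_2k+j}-\mathbf{1}\bar{x}_{d_2k}\|^2$ and $\|Y_{d_2k+j}-\mathbf{1}\bar{y}_{d_2k+j}\|^2$, and then combines them with weights $c_1,c_2$ so that the resulting cross-terms $e_1,e_2,e_3$ are nonpositive under \eqref{Eq_stepsize_1}. The only slip is your parenthetical guess for the weights: the paper takes $c_1=\tfrac{192\,d_2\gamma L}{n(1-\rho_W^{d_1})}$ and $c_2=\tfrac{9312\,d_2^3\gamma^3 L}{n(1-\rho_W^{d_1})^3}$, i.e.\ they \emph{grow} with $d_2\gamma L/(1-\rho_W^{d_1})$ rather than scaling like $(1-\rho_W^{d_1})/(d_2L)$ --- this is forced because $c_1\cdot\tfrac{1-\rho_W^{d_1}}{8}$ must dominate the $\tfrac{d_2\gamma L}{n}$-sized feedback from the consensus error into the optimality-gap bound, and similarly for $c_2$.
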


\begin{proof}
    See Section \ref{Proof_Of_Thm_1}.
\end{proof}

\begin{Cor}\label{Col_complexity}
Under the same setting of Theorem \ref{Thm_periodical}, the number of computation steps needed to achieve an accuracy of arbitrary small $\varepsilon \geqslant 0$ is
\begin{equation}\label{Eq_comp_complexity}
\begin{aligned}
\tilde{\mathcal{O}}\left( \frac{d_2L}{\left( 1-\rho _{W}^{d_1} \right) ^2\mu}+\frac{\sigma ^2}{\mu ^2 n \varepsilon}+\frac{d_2\sqrt{L\rho _{W}^{d_1}\sigma ^2}}{\sqrt{\mu ^3\left( 1-\rho _{W}^{d_1} \right) ^3\varepsilon}} \right) ,
\end{aligned}
\end{equation}
and the number of communication steps needed is
\begin{equation}\label{Eq_comm_complexity}
\begin{aligned}
\tilde{\mathcal{O}}\left( \frac{d_1L}{\left( 1-\rho _{W}^{d_1} \right) ^2\mu}+\frac{d_1\sigma ^2}{\mu ^2 n  {d_2} \varepsilon}+\frac{d_1\sqrt{L\rho _{W}^{d_1}\sigma ^2}}{\sqrt{\mu ^3\left( 1-\rho _{W}^{d_1} \right) ^3\varepsilon}} \right),
\end{aligned}
\end{equation}
where the notation $\tilde{\mathcal{O}}(\cdot)$ hides the logarithmic factors.
\end{Cor}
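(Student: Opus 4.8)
The plan is to turn the one-round contraction of Theorem~\ref{Thm_periodical} into a finite-horizon accuracy guarantee by unrolling it as a geometric series, then to choose the stepsize $\gamma$ so that the two additive error terms on the right-hand side of \eqref{Eq_Thm_stochastic} fall below the target accuracy $\varepsilon$, and finally to translate the resulting number of rounds into computation and communication steps by multiplying by $d_2$ and $d_1$ respectively.

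First, I would simplify the contraction factor in \eqref{Eq_Thm_stochastic}. Using the stepsize bound \eqref{Eq_stepsize_1} (in particular its third term) together with $\mu\le L$ from Assumption~\ref{Ass_joint_smo_cov} and $\rho_W^{d_1}<1$, one checks that $\tfrac{\mu d_2\gamma}{4}\le\tfrac{1-\rho_W^{d_1}}{8}$, so the minimum in \eqref{Eq_Thm_stochastic} equals $\tfrac{\mu d_2\gamma}{4}$. Abbreviating $a_k:=\mathbb{E}[V_{d_2k}]$, the recursion then has the form $a_{k+1}\le(1-r)a_k+A_2+A_3$ with $r=\tfrac{\mu d_2\gamma}{4}$, $A_2=\tfrac{d_2\gamma^2\sigma^2}{n}$ and $A_3=\tfrac{d_2^3\gamma^3L}{(1-\rho_W^{d_1})^3}M_\sigma$, and unrolling over $K$ rounds gives
\begin{equation*}
a_K\ \le\ (1-r)^K a_0+\frac{A_2+A_3}{r}\ \le\ e^{-\mu d_2\gamma K/4}a_0+\frac{4\gamma\sigma^2}{\mu n}+\frac{4d_2^2\gamma^2LM_\sigma}{\mu(1-\rho_W^{d_1})^3}.
\end{equation*}

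This is exactly the mixed quadratic/cubic recursion handled by the standard stepsize-selection arguments (cf.\ \cite{koloskova2020unified}): taking $\gamma$ equal to the minimum of the cap in \eqref{Eq_stepsize_1}, of $\Theta(\mu n\varepsilon/\sigma^2)$, and of $\Theta\big(d_2^{-1}\sqrt{\mu(1-\rho_W^{d_1})^3\varepsilon/(LM_\sigma)}\big)$ forces each of the last two terms above to be at most $\varepsilon/3$, and then $K\ge\tfrac{4}{\mu d_2\gamma}\log\tfrac{3a_0}{\varepsilon}$ rounds make $a_K\le\varepsilon$. Substituting the three candidate values of $1/\gamma$ into this expression for $K$ — discarding the $\sqrt[4]{\rho_W^{d_1}}$ and $\sqrt{\rho_W^{d_1}}$ factors of the cap via $\rho_W^{d_1}<1$, and using the bound $M_\sigma=\mathcal{O}(\rho_W^{d_1}\sigma^2)$ read off from \eqref{Def_M_sigma} — yields $K=\tilde{\mathcal{O}}\big(\tfrac{L}{(1-\rho_W^{d_1})^2\mu}+\tfrac{\sigma^2}{\mu^2nd_2\varepsilon}+\tfrac{\sqrt{L\rho_W^{d_1}\sigma^2}}{\sqrt{\mu^3(1-\rho_W^{d_1})^3\varepsilon}}\big)$, with $\log(a_0/\varepsilon)$ absorbed into $\tilde{\mathcal{O}}$.

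Finally, each outer round of Algorithm~\ref{Alg_Flexible_DSGT} issues exactly $d_2$ stochastic-gradient queries (the $l$-loop) and $d_1$ mixing operations (the $s$-loop), so the number of computation steps is $d_2K$ and the number of communication steps is $d_1K$; multiplying the bound for $K$ by $d_2$ and $d_1$ produces \eqref{Eq_comp_complexity} and \eqref{Eq_comm_complexity}. The only delicate parts are the verification that \eqref{Eq_stepsize_1} indeed collapses the contraction factor onto the $\tfrac{\mu d_2\gamma}{4}$ branch, and the bookkeeping of which of the three caps on $\gamma$ is active in each term together with the resulting powers of $d_1$, $d_2$ and $1-\rho_W^{d_1}$; no estimate beyond Theorem~\ref{Thm_periodical} and the explicit expression for $M_\sigma$ is required.
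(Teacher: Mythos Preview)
Your proposal follows exactly the route the paper indicates --- the paper omits the proof entirely and simply cites the standard stepsize-tuning arguments of \cite{stich2019unified,koloskova2020unified,huang2022tackling}, and that is precisely the machinery you deploy (unroll the contraction, cap $\gamma$ so each additive term is $\mathcal{O}(\varepsilon)$, read off $K$, then multiply by $d_2$ and $d_1$). The one slip is the claim ``$M_\sigma=\mathcal{O}(\rho_W^{d_1}\sigma^2)$ read off from \eqref{Def_M_sigma}'': the first summand $36(1-\rho_W^{d_1})^3\sigma^2$ carries no factor of $\rho_W^{d_1}$, so after division by $(1-\rho_W^{d_1})^3$ it leaves an $\mathcal{O}(d_2^2\gamma^2L\sigma^2/\mu)$ residual that does not fit your $\rho_W^{d_1}$-scaling; you should track that piece separately (it is lower order in the small-$\varepsilon$ regime targeted by the corollary) rather than absorb it into the bound you stated.
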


\begin{proof}
    We omit the proof due to the space limitation. The techniques used to adapt Theorem \ref{Thm_periodical} to the complexity results are common and can be found in \cite{stich2019unified, koloskova2020unified, huang2022tackling}.
\end{proof}

\begin{Rem}
Theorem \ref{Thm_periodical} shows that the proposed FlexGT algorithm converges linearly to a neighborhood of the optimal solution of Problem \eqref{Prob}, depending on the problem characteristics as well as the  frequency of communication and computation. Particularly, this result implies that 
increasing $d_2$ can result in a smaller stepsize and smaller steady-state error, while increasing $d_1$ can lead to a faster linear rate but also a significant increase in communication overhead.
To further illustrate the insight, we derive the computation and communication complexity of FlexGT in Corollary \ref{Col_complexity}. Notably, this result demonstrates the existence of a trade-off between the computation and communication complexity, that is, as the computation ({resp.} communication) frequency $d_2$ ({resp.} $d_1$) increases, the computation complexity will increase ({resp.} decrease), while the communication complexity will decrease ({resp.} increase or decrease depending on $\rho_W$), thereby calling for a careful design of $d_1$ and $d_2$ to balance communication and computation costs based on the problem setting. Moreover, compared to the existing results in \cite{liu2022decentralized, iakovidou2022s}, the convergence of FlexGT is independent of data heterogeneity among nodes captured as $\zeta _f:=\mathbf{sup}\left\{ \frac{1}{n}\sum_{i=1}^n{\left\| \nabla f_i\left( x \right) -\nabla f\left( x \right) \right\| ^2} \right\} , \forall x$ \cite{lian2017can}, without the assumption of uniformly bounded stochastic gradient, and is thus more robust in non-i.i.d. settings.
\end{Rem}

\section{Convergence Analysis}
In this section, we carry out the convergence analysis for the main results. We begin by introducing the following key lemmas that are crucial for the proof of Theorem \ref{Thm_periodical}.

\subsection{Key Lemmas}

\begin{Lem}[\textbf{Bounding optimality gap}]\label{Lem_opt_gap}
Suppose Assumptions \ref{Ass_joint_smo_cov}, \ref{Ass_bounded_var} and \ref{Ass_graph} hold. Let the setp-size satisfy $\gamma \leqslant \frac{1}{10d_2L}$. Then, we have for $\forall k \geqslant 0$,
\begin{equation}
\begin{aligned}
&\mathbb{E} \left[ \left\| \bar{x}_{d_2\left( k+1 \right)}-x^* \right\| ^2 \right] 
\\
&\leqslant \left( 1-\frac{d_2\mu \gamma}{2} \right) \mathbb{E} \left[ \left\| \bar{x}_{d_2k}-x^* \right\| ^2 \right] 
\\
&+\frac{12d_2\gamma L}{n}\mathbb{E} \left[ \left\| X_{d_2k}-\mathbf{1}\bar{x}_{d_2k} \right\| ^2 \right] 
\\
&+\frac{12d_{2}^{3}\gamma ^3L}{n}\mathbb{E} \left[ \left\| Y_{d_2k}-\mathbf{1}\bar{y}_{d_2k} \right\| ^2 \right] 
\\
&-d_2\gamma \mathbb{E} \left[ f\left( \bar{x}_{d_2k} \right) -f\left( x^* \right) \right] +\frac{d_2\gamma ^2\sigma ^2}{n}+36d_{2}^{3}\gamma ^3L\sigma ^2.
\end{aligned}
\end{equation}
\end{Lem}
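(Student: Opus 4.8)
\textbf{Proof proposal for Lemma~\ref{Lem_opt_gap}.}

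The plan is to track the evolution of the average iterate $\bar{x}_t$ over one full round of $d_2$ local updates followed by $d_1$ communication steps. The key observation is that the communication steps in \eqref{Compact_PerDSGT} preserve averages since $W$ is doubly stochastic, so $\bar{x}_{d_2(k+1)} = \bar{x}_{d_2k} - \gamma \sum_{j=0}^{d_2-1} \bar{y}_{d_2k+j}$, and moreover by the gradient-tracking invariant $\bar{y}_t = \frac{1}{n}\mathbf{1}^T \nabla G_t$ (which holds for all $t$ because $Y_0 = \nabla G_0$ and the $y$-update telescopes against $\nabla G$). Hence one round of the average dynamics is $\bar{x}_{d_2(k+1)} = \bar{x}_{d_2k} - \gamma \sum_{j=0}^{d_2-1} \frac{1}{n}\mathbf{1}^T \nabla G_{d_2k+j}$, i.e.\ a $d_2$-step inexact SGD step where the gradients are evaluated at the \emph{local} points $x_{i,d_2k+j}$ rather than at $\bar{x}_{d_2k}$.

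First I would expand $\|\bar{x}_{d_2(k+1)} - x^*\|^2$, take conditional expectation to kill the cross terms involving the stochastic noise (using Assumption~\ref{Ass_bounded_var} to bound the variance contribution by $d_2\gamma^2\sigma^2/n$, the $1/n$ coming from averaging $n$ independent oracle calls), and isolate the ``signal'' term $-2\gamma \langle \bar{x}_{d_2k}-x^*, \sum_j \nabla \bar f(x_{\cdot,d_2k+j})\rangle$ together with $\gamma^2\|\sum_j \nabla\bar f(x_{\cdot,d_2k+j})\|^2$, where $\nabla\bar f$ denotes the average of the local full gradients. Next I would use $\mu$-strong convexity and $L$-smoothness (Assumption~\ref{Ass_joint_smo_cov}) to convert the inner product into a contraction factor $(1 - d_2\mu\gamma/2)$ on $\|\bar{x}_{d_2k}-x^*\|^2$ plus a descent term $-d_2\gamma(f(\bar{x}_{d_2k}) - f(x^*))$, which requires relating $f$ evaluated at the local points back to $f(\bar{x}_{d_2k})$; this introduces a ``client drift'' error $\frac{1}{n}\sum_i \|x_{i,d_2k+j} - \bar{x}_{d_2k}\|^2$.

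The main obstacle will be bounding this drift: the deviation of $x_{i,d_2k+j}$ from $\bar{x}_{d_2k}$ accumulates over the $j$ inner steps and depends both on the consensus error $\|X_{d_2k} - \mathbf{1}\bar{x}_{d_2k}\|^2$ at the start of the round and on the tracking error $\|Y_{d_2k} - \mathbf{1}\bar{y}_{d_2k}\|^2$ (which drives how far $y_{i,\cdot}$, and hence the local trajectory, can wander). I would establish a one-round recursion for $\frac{1}{n}\sum_{j}\mathbb{E}\|X_{d_2k+j} - \mathbf{1}\bar{x}_{d_2k}\|^2$ in terms of these two quantities plus a $d_2^3\gamma^2\sigma^2$ noise term, unrolling the local $x$-updates $x_{i,d_2k+j+1} = x_{i,d_2k+j} - \gamma y_{i,d_2k+j}$ and using $\gamma \le \frac{1}{10 d_2 L}$ to absorb the self-consistent terms (this is where the constant $10$ and the resulting numerical constants $12$ and $36$ come from). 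Substituting this drift bound back, collecting the contraction term, the two error terms with coefficients $\tfrac{12 d_2\gamma L}{n}$ and $\tfrac{12 d_2^3\gamma^3 L}{n}$, the descent term, and the noise terms $\tfrac{d_2\gamma^2\sigma^2}{n} + 36 d_2^3\gamma^3 L\sigma^2$ yields exactly the claimed inequality. The routine but tedious part is the careful bookkeeping of constants through the nested summations; the conceptual core is simply: average dynamics = inexact multi-step SGD, with the inexactness controlled by consensus + tracking errors.
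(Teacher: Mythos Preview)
Your proposal is correct and follows essentially the same approach as the paper: the paper likewise expands $\|\bar{x}_{d_2(k+1)}-x^*\|^2$ using the average dynamics (via double stochasticity of $W$ and the tracking invariant $\mathbf{1}^T Y_t=\mathbf{1}^T\nabla G_t$), splits into an inner-product term handled by strong convexity/smoothness and a squared term handled by smoothness plus variance, and then controls the resulting client-drift sum $S_4=\sum_{j}\mathbb{E}\|X_{d_2k+j}-\mathbf{1}\bar{x}_{d_2k}\|^2$ by a separate lemma (Lemma~\ref{Lem_tech_client_diver}) that gives exactly the dependence on $\|X_{d_2k}-\mathbf{1}\bar{x}_{d_2k}\|^2$, $\|Y_{d_2k}-\mathbf{1}\bar{y}_{d_2k}\|^2$, $f(\bar{x}_{d_2k})-f(x^*)$, and $\sigma^2$ that you anticipate. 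The only cosmetic difference is that the paper isolates the drift bound as a standalone supporting lemma rather than deriving it inline.
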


\begin{proof}
    See Appendix \ref{Proof_of_lem_opt_gap}.
\end{proof}

\begin{Lem}[\textbf{Bounding consensus error}]\label{Lem_cons_err}
Suppose Assumptions \ref{Ass_joint_smo_cov}, \ref{Ass_bounded_var} and \ref{Ass_graph} hold. Let the stepsize satisfy
\begin{equation}\label{Eq_step_cons_err}
\gamma \leqslant \min \left\{ \frac{1}{8d_2L}, \frac{1-\rho _{W}^{d_1} }{12d_2L\sqrt{ \rho _{W}^{d_1}}} \right\}.
\end{equation}
Then, we have for $\forall k \geqslant 0$, 
\begin{equation}
\begin{aligned}
&\mathbb{E} \left[ \left\| X_{d_2\left( k+1 \right)}-\mathbf{1}\bar{x}_{d_2\left( k+1 \right)} \right\| ^2 \right] 
\\
&\leqslant \frac{3+\rho _{W}^{d_1}}{4}\mathbb{E} \left[ \left\| X_{d_2k}-\mathbf{1}\bar{x}_{d_2k} \right\| ^2 \right] 
\\
&+\frac{192nd_{2}^{4}\gamma ^4L^3\rho _{W}^{d_1}}{1-\rho _{W}^{d_1}}\mathbb{E} \left[ f\left( \bar{x}_{d_2k} \right) -f\left( x^* \right) \right] 
\\
&+\frac{6d_{2}^{2}\gamma ^2\rho _{W}^{d_1}}{1-\rho _{W}^{d_1}}\mathbb{E} \left[ \left\| Y_{d_2k}-\mathbf{1}\bar{y}_{d_2k} \right\| ^2 \right] +\frac{18nd_{2}^{2}\gamma ^2\rho _{W}^{d_1}}{1-\rho _{W}^{d_1}}\sigma ^2.
\end{aligned}
\end{equation}
\end{Lem}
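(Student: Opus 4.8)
The plan is to propagate the consensus error through the compact recursion \eqref{Compact_PerDSGT} after projecting onto the disagreement subspace. Since $W$ is doubly stochastic, $\mathbf{J}W=W\mathbf{J}=\mathbf{J}$, hence $(\mathbf{I}-\mathbf{J})W^{d_1}=W^{d_1}-\mathbf{J}=(W-\mathbf{J})^{d_1}$ with $\|W^{d_1}-\mathbf{J}\|_2^{2}\leq\rho_W^{d_1}$, and $X_{d_2(k+1)}-\mathbf{1}\bar x_{d_2(k+1)}=(\mathbf{I}-\mathbf{J})X_{d_2(k+1)}$. Using \eqref{Eq_x_update} and subtracting the mean components, which lie in the kernel of $W^{d_1}-\mathbf{J}$, I would write
\[
X_{d_2(k+1)}-\mathbf{1}\bar x_{d_2(k+1)}=(W^{d_1}-\mathbf{J})\Big[(X_{d_2k}-\mathbf{1}\bar x_{d_2k})-\gamma\textstyle\sum_{j=0}^{d_2-1}(Y_{d_2k+j}-\mathbf{1}\bar y_{d_2k+j})\Big],
\]
take squared Frobenius norms, apply $\|W^{d_1}-\mathbf{J}\|_2^{2}\leq\rho_W^{d_1}$, and split the two terms with Young's inequality $\|a-b\|^2\leq(1+\alpha)\|a\|^2+(1+\alpha^{-1})\|b\|^2$ choosing $\alpha\asymp(1-\rho_W^{d_1})/\rho_W^{d_1}$, so that the coefficient of $\mathbb{E}\|X_{d_2k}-\mathbf{1}\bar x_{d_2k}\|^2$ is at most $(3+\rho_W^{d_1})/4$ and the cross term carries a factor $\rho_W^{d_1}/(1-\rho_W^{d_1})$.

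The remaining work is to bound the accumulated tracking disagreement $\sum_j\mathbb{E}\|Y_{d_2k+j}-\mathbf{1}\bar y_{d_2k+j}\|^2$. Since no communication takes place inside the inner loop, the tracking update telescopes to $Y_{d_2k+j}=Y_{d_2k}+\nabla G_{d_2k+j}-\nabla G_{d_2k}$, hence $Y_{d_2k+j}-\mathbf{1}\bar y_{d_2k+j}=(Y_{d_2k}-\mathbf{1}\bar y_{d_2k})+(\mathbf{I}-\mathbf{J})(\nabla G_{d_2k+j}-\nabla G_{d_2k})$; after Cauchy--Schwarz this reduces the task to $\mathbb{E}\|Y_{d_2k}-\mathbf{1}\bar y_{d_2k}\|^2$ and $\sum_j\mathbb{E}\|\nabla G_{d_2k+j}-\nabla G_{d_2k}\|^2$. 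The latter I would bound using $L$-smoothness (Assumption \ref{Ass_joint_smo_cov}) for the deterministic part and the independence of the oracle noise together with the variance bound (Assumption \ref{Ass_bounded_var}) for the stochastic part, which leaves everything controlled by the local drift $D_k:=\sum_{j=0}^{d_2-1}\mathbb{E}\|X_{d_2k+j}-X_{d_2k}\|^2$.

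The crux is therefore a self-bounding estimate for $D_k$. From the in-loop $x$-update, $X_{d_2k+j}-X_{d_2k}=-\gamma\sum_{l<j}Y_{d_2k+l}$; expanding $Y_{d_2k+l}=(Y_{d_2k+l}-\mathbf{1}\bar y_{d_2k+l})+\mathbf{1}\bar y_{d_2k+l}$, I would reuse the telescoping identity for the disagreement part, and for the mean part use the invariant $\bar y_t=\tfrac1n\mathbf{1}^T\nabla G_t$ (preserved by the tracking scheme and by the averaging $W$), so that $n\mathbb{E}\|\bar y_{d_2k+l}\|^2$ is controlled, up to constants, by $\sigma^2+nL\,\mathbb{E}[f(\bar x_{d_2k})-f(x^*)]+L^2\mathbb{E}\|X_{d_2k}-\mathbf{1}\bar x_{d_2k}\|^2$ (invoking smoothness, $\nabla f(x^*)=0$, and $\|\nabla f(\bar x_{d_2k})\|^2\leq 2L(f(\bar x_{d_2k})-f(x^*))$) plus further drift contributions. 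This produces an inequality $D_k\leq c\gamma^2d_2^2L^2\,D_k+(\text{terms in }\mathbb{E}\|Y_{d_2k}-\mathbf{1}\bar y_{d_2k}\|^2,\ \mathbb{E}[f(\bar x_{d_2k})-f(x^*)],\ \mathbb{E}\|X_{d_2k}-\mathbf{1}\bar x_{d_2k}\|^2,\ n\sigma^2)$; the stepsize condition \eqref{Eq_step_cons_err} makes $c\gamma^2d_2^2L^2<1$, closing the recursion to give $D_k\lesssim\gamma^2d_2^3\big(\mathbb{E}\|Y_{d_2k}-\mathbf{1}\bar y_{d_2k}\|^2+nL\,\mathbb{E}[f(\bar x_{d_2k})-f(x^*)]+L^2\mathbb{E}\|X_{d_2k}-\mathbf{1}\bar x_{d_2k}\|^2+n\sigma^2\big)$. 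Substituting this back through the chain above, collecting the $\rho_W^{d_1}/(1-\rho_W^{d_1})$ factors, and using \eqref{Eq_step_cons_err} once more so that the residual $\mathbb{E}\|X_{d_2k}-\mathbf{1}\bar x_{d_2k}\|^2$ contribution stays inside the leading $(3+\rho_W^{d_1})/4$ coefficient, the four terms of the claimed inequality emerge after matching constants. The main obstacle is precisely this drift recursion: the $Y$ iterates driving the drift are themselves driven by the drift (through $\nabla G_{d_2k+j}-\nabla G_{d_2k}$), so the estimate must be arranged as a self-bounding inequality and closed via the stepsize restriction, while the $n$- and $\rho_W^{d_1}$-dependence has to be tracked carefully enough to reproduce the stated coefficients.
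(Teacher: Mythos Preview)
Your proposal is correct and follows the same overall strategy as the paper: project onto the disagreement subspace, apply Young's inequality with parameter $\alpha=(1-\rho_W^{d_1})/(1+\rho_W^{d_1})$ to peel off the $(1+\rho_W^{d_1})/2$ contraction, reduce $\sum_j\mathbb{E}\|Y_{d_2k+j}-\mathbf{1}\bar y_{d_2k+j}\|^2$ via the telescoped identity $Y_{d_2k+j}=Y_{d_2k}+\nabla G_{d_2k+j}-\nabla G_{d_2k}$ to a local drift, then close a self-bounding recursion on the drift using the stepsize condition and absorb the leftover $\mathbb{E}\|X_{d_2k}-\mathbf{1}\bar x_{d_2k}\|^2$ into the leading coefficient to reach $(3+\rho_W^{d_1})/4$.

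The only notable differences are in packaging and in the drift quantity. The paper isolates the two inner-loop estimates as separate lemmas: a ``client divergence'' bound on $\mathbb{E}\|X_{d_2k+t}-\mathbf{1}\bar x_{d_2k}\|^2$ (Lemma~\ref{Lem_tech_client_diver}) and an ``inner tracking error'' bound on $\mathbb{E}\|Y_{d_2k+t}-\mathbf{1}\bar y_{d_2k+t}\|^2$ (Lemma~\ref{Lem_inner_tra_err}), and then Lemma~\ref{Lem_cons_err} is obtained in one line by plugging Lemma~\ref{Lem_inner_tra_err} into the Young split. In particular, the paper measures drift from the \emph{mean} $\mathbf{1}\bar x_{d_2k}$ rather than from $X_{d_2k}$, and closes the self-bound by a one-step Young recursion iterated $t$ times and controlled via Bernoulli's inequality $(1+1/(d_2-1)+4d_2L^2\gamma^2)^t\le e^{1+1/16}<3$, rather than your direct $D_k\le c\gamma^2d_2^2L^2D_k+\cdots$ argument. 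Both routes yield the same final constants; the paper's choice has the advantage that the same Lemma~\ref{Lem_tech_client_diver} is reused verbatim in the proofs of Lemmas~\ref{Lem_opt_gap} and~\ref{Lem_tra_err}.
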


\begin{proof}
    See Appendix \ref{Proof_of_lem_con_err}.
\end{proof}

\begin{Lem}[\textbf{Bounding tracking error}]\label{Lem_tra_err}
Suppose Assumptions \ref{Ass_joint_smo_cov}, \ref{Ass_bounded_var} and \ref{Ass_graph} hold. Let the stepsize satisfy
\begin{equation}\label{Eq_step_tracking_err}
\gamma \leqslant \left\{ \frac{1}{8d_2L}, \frac{1-\rho _{W}^{d_1}}{10d_2L\sqrt{\rho _{W}^{d_1}}} \right\}.
\end{equation}
Then, we have for $\forall k \geqslant 0$,
\begin{equation}
\begin{aligned}
&\mathbb{E} \left[ \left\| Y_{d_2\left( k+1 \right)}-\mathbf{1}\bar{y}_{d_2\left( k+1 \right)} \right\| ^2 \right] 
\\
&\leqslant \frac{3+\rho _{W}^{d_1}}{4}\mathbb{E} \left[ \left\| Y_{d_2k}-\mathbf{1}\bar{y}_{d_2k} \right\| ^2 \right] 
\\
&+\frac{30\rho _{W}^{d_1}L^2}{1-\rho _{W}^{d_1}}\mathbb{E} \left[ \left\| X_{d_2k}-\mathbf{1}\bar{x}_{d_2k} \right\| ^2 \right] 
\\
&+\frac{96n\rho _{W}^{d_1}d_{2}^{2}\gamma ^2L^3}{1-\rho _{W}^{d_1}}\mathbb{E} \left[ f\left( \bar{x}_{d_2k} \right) -f\left( x^* \right) \right] +6n\rho _{W}^{d_1}\sigma ^2.
\end{aligned}
\end{equation}
\end{Lem}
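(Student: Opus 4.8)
\textit{Proof sketch.}
The plan is to propagate the tracking error $\|Y_t-\mathbf{1}\bar y_t\|^2$ through one round of Algorithm~\ref{Alg_Flexible_DSGT} using the compact recursion \eqref{Eq_y_update}, peel off the averaging contraction, and reduce the residual gradient increment to the other two Lyapunov quantities in \eqref{Def_Lyapunov_func} and to the gradient variance. Throughout I use that $W$ is doubly stochastic, so $\mathbf{J}W^{d_1}=\mathbf{J}$, $W^{d_1}-\mathbf{J}=(W-\mathbf{J})^{d_1}$ and hence $\|W^{d_1}-\mathbf{J}\|_2^2\le\rho_W^{d_1}$ (Assumption~\ref{Ass_graph}), and that $\bar y_t=\overline{\nabla G}_t$ for every $t$ --- the gradient-tracking invariant, obtained by telescoping the $y$-update and using $\mathbf{1}^TW=\mathbf{1}^T$.

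\emph{Step 1 (extract the contraction).} Subtracting $\mathbf{1}\bar y_{d_2(k+1)}=\mathbf{J}Y_{d_2(k+1)}$ from \eqref{Eq_y_update} gives
\begin{equation*}
Y_{d_2(k+1)}-\mathbf{1}\bar y_{d_2(k+1)}=\bigl(W^{d_1}-\mathbf{J}\bigr)\bigl(Y_{d_2k}-\mathbf{1}\bar y_{d_2k}+\nabla G_{d_2(k+1)}-\nabla G_{d_2k}\bigr).
\end{equation*}
I would write $\nabla G_{d_2(k+1)}-\nabla G_{d_2k}=(\nabla F_{d_2(k+1)}-\nabla F_{d_2k})+(\text{stochastic-gradient noise})$, whose second moment is $\Theta(n\sigma^2)$ by Assumption~\ref{Ass_bounded_var}; the part of the noise generated during round $k$ is conditionally mean-zero, hence decouples from the other summands, and after the bound $\|W^{d_1}-\mathbf{J}\|_2^2\le\rho_W^{d_1}$ it leaves a term of order $n\rho_W^{d_1}\sigma^2$. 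Applying Young's inequality $\|a+b\|^2\le(1+\beta)\|a\|^2+(1+\beta^{-1})\|b\|^2$ with $\beta=(1-\rho_W^{d_1})/(2\rho_W^{d_1})$ to the remaining two summands then produces a bound whose leading term is $\tfrac{1+\rho_W^{d_1}}{2}\,\mathbb{E}\|Y_{d_2k}-\mathbf{1}\bar y_{d_2k}\|^2$, plus a $\tfrac{c_0\rho_W^{d_1}}{1-\rho_W^{d_1}}\,\mathbb{E}\|\nabla F_{d_2(k+1)}-\nabla F_{d_2k}\|^2$ term for an absolute constant $c_0$; the margin $\tfrac{3+\rho_W^{d_1}}{4}-\tfrac{1+\rho_W^{d_1}}{2}=\tfrac{1-\rho_W^{d_1}}{4}$ is kept to reabsorb the $Y$-feedback produced in Step 3.

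\emph{Steps 2--3 (smoothness and inner-loop drift --- the crux).} By row-wise $L$-smoothness (Assumption~\ref{Ass_joint_smo_cov}), $\|\nabla F_{d_2(k+1)}-\nabla F_{d_2k}\|^2\le L^2\|\Delta X\|^2$, where $\Delta X$ is the aggregate change of the $X$-iterates between the two evaluations, equal along the inner loop to $-\gamma\sum_{l=0}^{d_2-1}Y_{d_2k+l}$ (possibly together with a mixing increment, similarly controlled). To bound $\gamma^2\mathbb{E}\|\sum_l Y_{d_2k+l}\|^2$ I would telescope the inner $y$-updates, $Y_{d_2k+l}=Y_{d_2k}+\nabla G_{d_2k+l}-\nabla G_{d_2k}$, so that smoothness and Assumption~\ref{Ass_bounded_var} give $\mathbb{E}\|Y_{d_2k+l}\|^2\le 2\,\mathbb{E}\|Y_{d_2k}\|^2+4L^2\gamma^2\,\mathbb{E}\|\sum_{j<l}Y_{d_2k+j}\|^2+\Theta(n\sigma^2)$; summing over $l$ and using $\gamma\le 1/(8d_2L)$ from \eqref{Eq_step_tracking_err} to move $\sum_l\mathbb{E}\|Y_{d_2k+l}\|^2$ to the left closes the recursion and yields $\gamma^2\mathbb{E}\|\sum_l Y_{d_2k+l}\|^2\lesssim\gamma^2d_2^2\bigl(\mathbb{E}\|Y_{d_2k}\|^2+n\sigma^2\bigr)$. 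Finally I would split $\|Y_{d_2k}\|^2\le 2\|Y_{d_2k}-\mathbf{1}\bar y_{d_2k}\|^2+2n\|\bar y_{d_2k}\|^2$ and invoke the tracking invariant: since $\bar y_{d_2k}=\overline{\nabla G}_{d_2k}$ and $\tfrac1n\sum_i\nabla f_i(\bar x_{d_2k})=\nabla f(\bar x_{d_2k})$, smoothness and Jensen bound the aggregate consensus part by $\tfrac{L^2}{n}\|X_{d_2k}-\mathbf{1}\bar x_{d_2k}\|^2$ --- the step at which no heterogeneity term $\zeta_f$ appears --- while $\|\nabla f(\bar x_{d_2k})\|^2\le 2L\bigl(f(\bar x_{d_2k})-f(x^*)\bigr)$ by smoothness and optimality of $x^*$, and the averaged stochastic error is at most $\sigma^2/n$ by independence across nodes; altogether $n\,\mathbb{E}\|\bar y_{d_2k}\|^2\lesssim L^2\mathbb{E}\|X_{d_2k}-\mathbf{1}\bar x_{d_2k}\|^2+nL\,\mathbb{E}[f(\bar x_{d_2k})-f(x^*)]+\sigma^2$.

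\emph{Step 4 (assembly and main obstacle).} Chaining the last three estimates gives $\mathbb{E}\|\nabla F_{d_2(k+1)}-\nabla F_{d_2k}\|^2\lesssim L^2\gamma^2d_2^2\bigl(\mathbb{E}\|Y_{d_2k}-\mathbf{1}\bar y_{d_2k}\|^2+L^2\mathbb{E}\|X_{d_2k}-\mathbf{1}\bar x_{d_2k}\|^2+nL\,\mathbb{E}[f(\bar x_{d_2k})-f(x^*)]+n\sigma^2\bigr)$; multiplying by the $c_0\rho_W^{d_1}/(1-\rho_W^{d_1})$ prefactor from Step 1 and invoking $\gamma\le(1-\rho_W^{d_1})/(10d_2L\sqrt{\rho_W^{d_1}})$ from \eqref{Eq_step_tracking_err}, which makes $\rho_W^{d_1}L^2\gamma^2d_2^2/(1-\rho_W^{d_1})$ at most a small multiple of $1-\rho_W^{d_1}$, the induced multiple of $\mathbb{E}\|Y_{d_2k}-\mathbf{1}\bar y_{d_2k}\|^2$ fits inside the $\tfrac{1-\rho_W^{d_1}}{4}$ margin so the contraction becomes $\tfrac{3+\rho_W^{d_1}}{4}$, while the remaining pieces, after bounding $L^2\gamma^2d_2^2$ by an absolute constant, reduce (up to the explicit numerical constants) to the stated $\tfrac{30\rho_W^{d_1}L^2}{1-\rho_W^{d_1}}\|X_{d_2k}-\mathbf{1}\bar x_{d_2k}\|^2$, $\tfrac{96n\rho_W^{d_1}d_2^2\gamma^2L^3}{1-\rho_W^{d_1}}[f(\bar x_{d_2k})-f(x^*)]$ and $6n\rho_W^{d_1}\sigma^2$ terms. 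I expect the main obstacle to be Step 3: one must propagate the $d_2$ inner gradient-tracking updates, close the drift recursion under exactly the prescribed stepsize, and handle the stochastic-gradient noise --- in particular the correlation of $\nabla G_{d_2k}$ with the inner iterates generated during round $k$ --- all without reintroducing a data-heterogeneity term; this inner-loop drift control, rather than the outer contraction, is the technically delicate core of the argument.
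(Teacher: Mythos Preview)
Your proposal is correct and follows essentially the same route as the paper. The only organizational difference is that, after Step~1 (identical to the paper's), the paper bounds $\|\nabla F_{d_2(k+1)}-\nabla F_{d_2k}\|^2$ by routing through the reference point $\mathbf{1}\bar x_{d_2k}$ and then invoking the separately established client-divergence Lemma~\ref{Lem_tech_client_diver} (adapted to $t=d_2$), whereas your Steps~2--3 re-derive that lemma inline via the same $Y$-telescoping $Y_{d_2k+l}=Y_{d_2k}+\nabla G_{d_2k+l}-\nabla G_{d_2k}$ and the same split of $\|Y_{d_2k}\|^2$ into tracking error, consensus error, function gap, and variance. One minor caveat on the noise handling: while the fresh noise $\nabla G_{d_2(k+1)}-\nabla F_{d_2(k+1)}$ does decouple as you say, the stale noise $\nabla G_{d_2k}-\nabla F_{d_2k}$ already sits inside $Y_{d_2k}$ (through the previous round's update), so the cross term $\mathbb{E}\langle Y_{d_2k}-\mathbf{1}\bar y_{d_2k},\nabla F_{d_2k}-\nabla G_{d_2k}\rangle$ is not zero; the paper simply bounds it by $n\sigma^2$, which still fits inside your $\Theta(n\rho_W^{d_1}\sigma^2)$ budget.
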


\begin{proof}
    See Appendix \ref{Proof_of_lem_tra_err}.
\end{proof}

\subsection{Proofs of Main Results}

\subsubsection{Proof of Theorem \ref{Thm_periodical}}\label{Proof_Of_Thm_1}
Recalling the defined Lyapunov function \eqref{Def_Lyapunov_func} with
\begin{equation}\label{Eq_c1_c2}
\begin{aligned}
c_1=\frac{192d_2\gamma L}{n\left( 1-\rho _{W}^{d_1} \right)},\quad c_2=\frac{9312d_{2}^{3}\gamma ^3L}{n\left( 1-\rho _{W}^{d_1} \right) ^3}.
\end{aligned}
\end{equation}
Then, with the help of Lemma \ref{Lem_opt_gap}, \ref{Lem_cons_err} and \ref{Lem_tra_err}, we can bound the Lyapunov function as follows:
\begin{equation}
\begin{aligned}
&\mathbb{E} \left[ V_{d_2\left( k+1 \right)} \right] 
\leqslant \left( 1-\min \left\{ \frac{\gamma \mu d_2}{4},\frac{1-\rho _{W}^{d_1}}{8} \right\} \right) \mathbb{E} \left[ V_{d_2k} \right] 
\\
&\leqslant \frac{d_2\gamma ^2\sigma ^2}{n}+\frac{d_{2}^{3}\gamma ^3L}{\left( 1-\rho _{W}^{d_1} \right) ^3}M_{\sigma}+e_1\mathbb{E} \left[ f\left( \bar{x}_{d_2k} \right) -f\left( x^* \right) \right] 
\\
&+e_2\mathbb{E} \left[ \left\| X_{d_2k}-\mathbf{1}\bar{x}_{d_2k} \right\| ^2 \right] +e_3\mathbb{E} \left[ \left\| Y_{d_2k}-\mathbf{1}\bar{y}_k \right\| ^2 \right] ,
\end{aligned}
\end{equation}
where
\begin{equation*}
\begin{aligned}
e_1&:=c_1\frac{192nd_{2}^{4}\gamma ^4L^3\rho _{W}^{d_1}}{1-\rho _{W}^{d_1}}+c_2\frac{96n\rho _{W}^{d_1}d_{2}^{2}\gamma ^2L^3}{1-\rho _{W}^{d_1}}-d_2\gamma ,
\\
e_2&:=\frac{12d_2\gamma L}{n}+c_2\frac{30\rho _{W}^{d_1}L^2}{1-\rho _{W}^{d_1}}-c_1\frac{1-\rho _{W}^{d_1}}{8},
\\
e_3&:=\frac{12d_{2}^{3}\gamma ^3L}{n}+c_1\frac{6d_{2}^{2}\gamma ^2\rho _{W}^{d_1}}{1-\rho _{W}^{d_1}}-c_2\frac{1-\rho _{W}^{d_1}}{8},
\end{aligned}
\end{equation*}
and
\begin{equation}\label{Def_M_sigma}
\begin{aligned}
M_{\sigma}&:=36\left( 1-\rho _{W}^{d_1} \right) ^3\sigma ^2+3456\left( 1-\rho _{W}^{d_1} \right) \rho _{W}^{d_1}\sigma ^2
\\
&+55872\rho _{W}^{d_1}\sigma ^2.
\end{aligned}
\end{equation}
Letting $e_1, e_2, e_3\leqslant 0$ with the stepsize $\gamma$ satisfying \eqref{Eq_stepsize_1}, we obtain
the result in \eqref{Eq_Thm_stochastic}, which completes the proof.

\begin{Rem}
The proof of the main results relies on double-loop analysis and a properly designed Lyapunov function as depicted in \eqref{Def_Lyapunov_func}. Specifically, we first derive the upper bounds of consensus error and gradient tracking error within a period (inner loop), respectively (c.f., Lemmas \ref{Lem_inner_cons_err} and \ref{Lem_inner_tra_err}). These bounds are then used to establish the contraction properties of the consensus error and gradient tracking error at each round (outer loop) (c.f., Lemmas \ref{Lem_cons_err} and \ref{Lem_tra_err}).  Finally, we combine these obtained error terms to construct the Lyapunov function with properly designed coefficients \eqref{Eq_c1_c2}, which allows us to obtain a rate result that shows the sharp dependence of convergence performance on the properties of objective functions, network topology as well as the frequency of communication and computation. 
It should be noted that this result also matches the best-known rate of the DSGT \cite{koloskova2021improved} (a special case with $d_1=d_2=1$) under the same setting, i.e., $p=c=1-\rho _W$.
\end{Rem}

\section{Numerical Experiments}
In this section, we report a series of numerical experiments to verify the theoretical findings of the proposed FlexGT algorithm by means of a synthetic example. Specifically, we consider the following quadratic function:
\begin{equation}
\begin{aligned}
\underset{x}{\min}f\left( x \right) =\frac{1}{n}\sum_{i=1}^n{\underset{=:f_i}{\underbrace{\left( \mathbb{E} _{v_i}\left[ \left( h_{i}^{T}x-\nu _i \right) ^2+\frac{\mu}{2}\left\| x \right\| ^2 \right] \right) }}},
\end{aligned}
\end{equation}
where $\mu \geqslant0$ is the regularization parameter, $h_i\in \left[ 0,1 \right] ^p$ denotes the feature parameters of node $i$ with dimension $p=10$, and $v_i\sim \mathcal{N} \left( \bar{v}_i,\sigma ^2 \right)$ with $\bar{v}_i\in \left[ 0,1 \right]$. Therefore, the algorithm can obtain an unbiased noisy gradient $g_{i}\left( x_{i,t} \right) :=\nabla f_i\left( x_{i,t} \right) +\delta _{i,t}$ with $\delta _{i,t}\sim \mathcal{N} \left( 0,\sigma^2 \right) $ at each iteration $t$.
Moreover, we set the stepsize according to the choice of $d_1$ and $d_2$, i.e., $\gamma =c\left( 1-\rho _{W}^{d_1} \right) ^2/\left( d_2L \right) $, where $c=10$ is a constant and Lipschitz constant is set to $L=1$.

\textbf{Communication and computation trade-off.} 
To balance the communication and computation costs in practice, we attempt to minimize the weighted-sum of the obtained communication and computation complexity results:
\begin{equation}\label{Eq_weighted_sum_compl}
\begin{aligned}
\underset{d_1, d_2}{\min}\,\,\left( \omega _1\mathcal{C} _1+\omega _2\mathcal{C} _2 \right),
\end{aligned}
\end{equation}
where $\mathcal{C} _1$ and $\mathcal{C} _2$ represent the obtained communication and computation complexity respectively, $\omega _1$ and $\omega _2$ are the corresponding weights. We note that $\mathcal{C} _2=\mathcal{C} _1d_1/d_2$, and thus plot the heat-map of weighted complexity of FlexGT to achieve $\varepsilon = 10^{-5}$ accuracy with different $d_1$ and $d_2$ under specific settings in Fig.~ \ref{Fig_3}. It can be observed that the overal complexity of FlexGT varies with the increase of $d_1$ or $d_2$ and achieves the best performance at $d_1=3$ and $d_2=2$ (see green box on the left). Moreover, if we keep the radio of $d_2$ and $d_1$ fixed, i.e., $d_2/d_1=d$, as shown on the right, there exists an optimal ratio to minimizing the weighted-sum complexity \eqref{Eq_weighted_sum_compl}. These observations illustrate the trade-offs between communication and computation conditioned on the properties of objective functions and graph topology.

\textbf{Eliminating the effects of node heterogeneity.}
In Fig.~\ref{Fig_4}, we compare the convergence performance of the proposed FlexGT algorithm with DFL \cite{liu2022decentralized} and their special cases DSGT (FlexGT with $d_1=d_2=1$) and D-PSGD (DFL with $d_1=d_2=1$) in terms of computation and communication steps. We note that there is heterogeneity among the objectives $f_i$ of nodes due to differences in $\{ h_i \} _{i=1}^{n}$. In this scenario, it can be observed that FlexGT has a significant advantage over DFL in terms of steady-state error thanks to the gradient tracking method. Moreover, the choice of $d_1=3$ and $d_2=2$ makes FlexGT achieve better computation and communication complexity to reach an accuracy of $\varepsilon=10^{-5}$.

\begin{figure}[!tb]
    \centering
    \subfloat 
    {
        \begin{minipage}[t]{0.22\textwidth}
            \centering          
            \includegraphics[width=\textwidth]{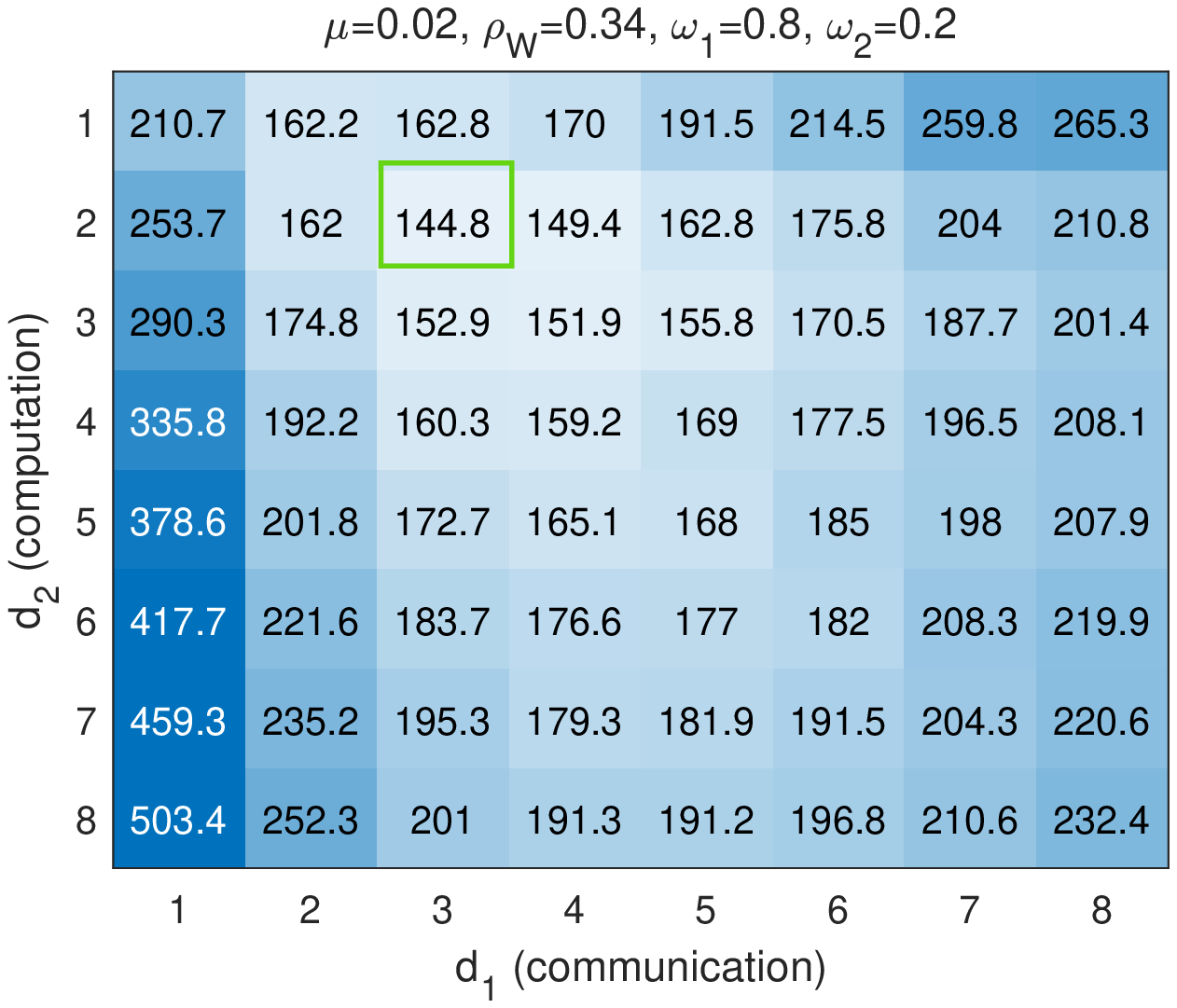}   
        \end{minipage}%
    }
    \subfloat 
    {
        \begin{minipage}[t]{0.255\textwidth}
            \centering          
            \includegraphics[width=\textwidth]{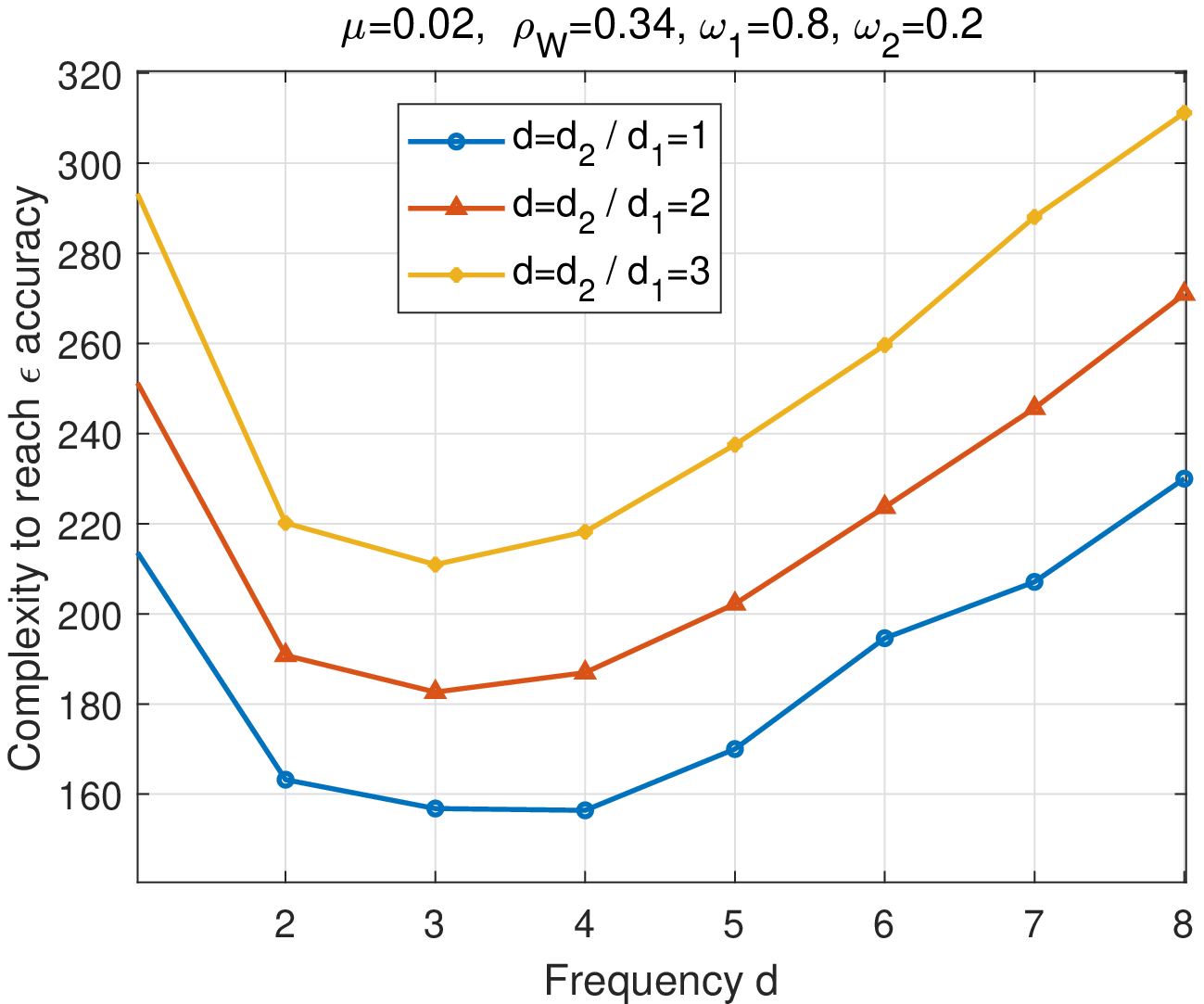}   
        \end{minipage}%
    }   
    \caption{Weighted-sum complexity of FlexGT algorithm to achieve $\varepsilon=10^{-5}$ accuracy with different communication and computation frequency in each round over exponential graphs of $n=20$ nodes with $N_i=5$ neighbors.}
    \label{Fig_3}
    \vspace{-0.4cm}
\end{figure}

\begin{figure}[!tb]
    \centering
    \subfloat 
    {
        \begin{minipage}[t]{0.245\textwidth}
            \centering          
            \includegraphics[width=\textwidth]{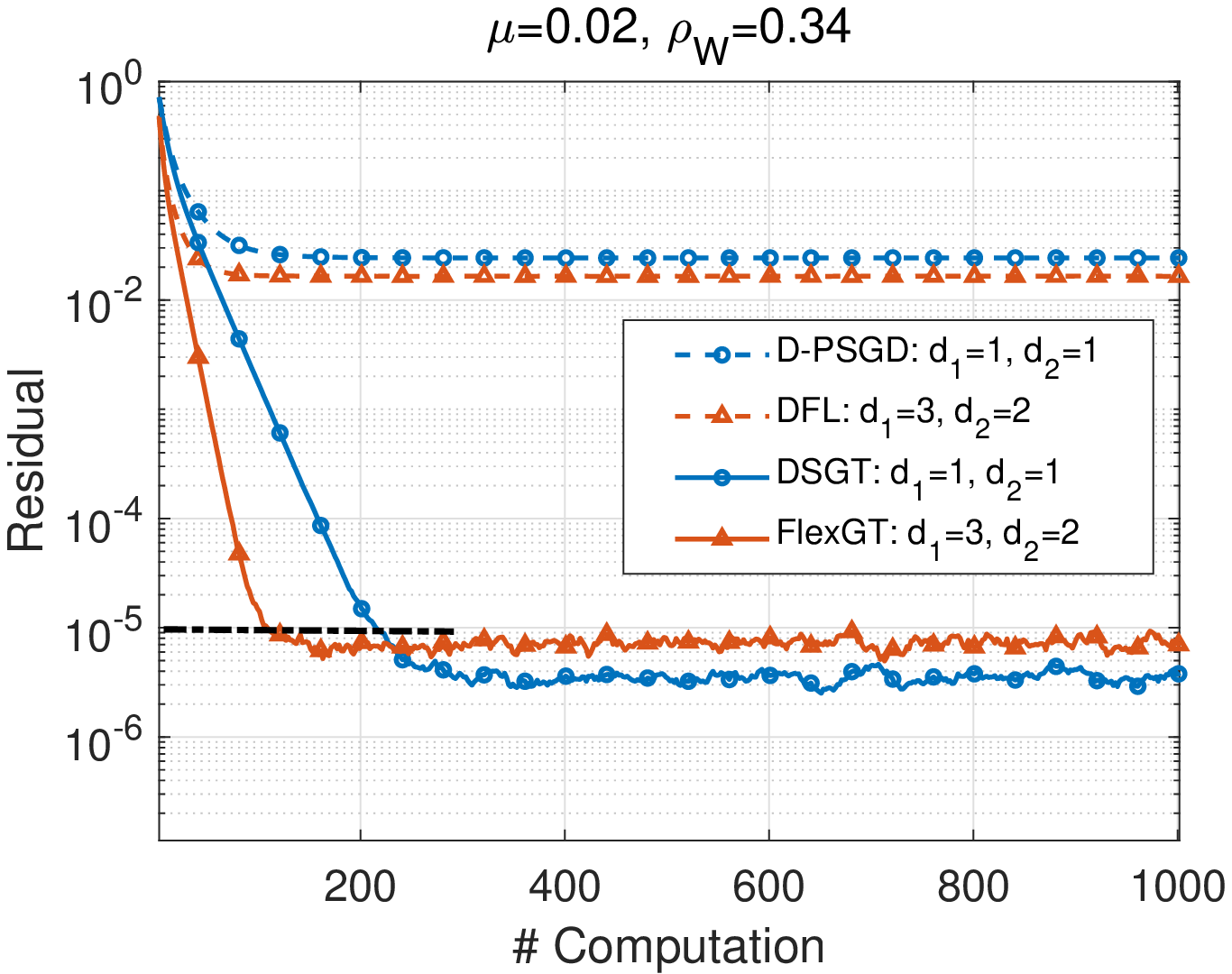}   
        \end{minipage}%
    }
    \subfloat 
    {
        \begin{minipage}[t]{0.245\textwidth}
            \centering          
            \includegraphics[width=\textwidth]{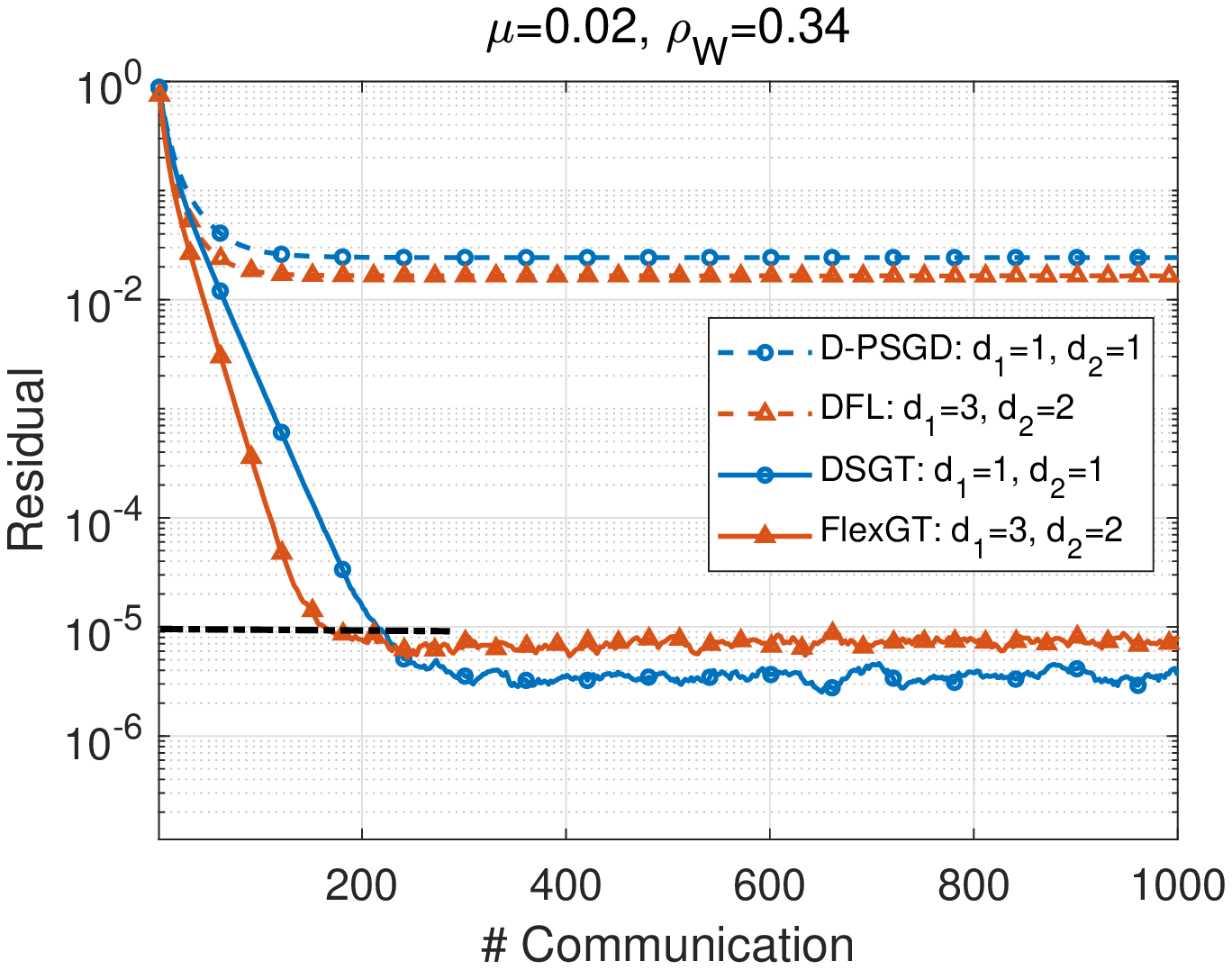}   
        \end{minipage}%
    }   
    \caption{Comparison for FlexGT, DFL, standard D-PSGD and DSGT algorithms over exponential graphs of $n=20$ nodes with $N_i=5$ neighbors.}
    \label{Fig_4}
    \vspace{-0.5cm}
\end{figure}

\section{Conclusions}
In this paper, we proposed a flexible and efficient distributed stochastic gradient tracking method FlexGT for solving distributed stochastic optimization problems under non-i.i.d settings. Our approach is able to handle data heterogeneity and design adjustable protocols to balance communication and computation costs according to the properties of the objective functions and network topology. We also provided theoretical guarantees for the proposed algorithm, including communication and computation complexity analysis for strongly convex and smooth objective functions, regardless of heterogeneity among nodes. These results provided an intuitive way to tune communication and computation protocols, highlighting their trade-offs. It will also be important to extend these results into more general settings, such as non-convex and time-varying cases, in the future work.

\appendix
\section{Appendix}

In this section, we provide the missing proofs for the lemmas and theorem in the main text. To this end, we first provide several supporting lemmas for the analysis.

\subsection{Supporting Lemmas}\label{App_Lem_1}

\begin{Lem}[Bernoulli's inequality] \label{Lem_tech_1}
For constants $\beta \geqslant 1$ and $\lambda>0$, we have
\begin{equation}
\left( 1+\frac{\lambda}{\beta} \right) ^{\beta}\leqslant e^{\lambda}.
\end{equation}
\end{Lem}

\begin{Lem}[\textbf{Bounding client divergence}]\label{Lem_tech_client_diver}
Suppose Assumptions \ref{Ass_joint_smo_cov}, \ref{Ass_bounded_var} and \ref{Ass_graph} hold. Let the stepsize satisfy $\gamma \leqslant \frac{1}{8d_2L}$. Then, we have for $k\geqslant 0$ and integer $t\in \left[ 1,d_2-1 \right] $, $d_2\geqslant2$, 
\begin{equation}
\begin{aligned}
&\mathbb{E} \left[ \left\| X_{d_2k+t}-\mathbf{1}\bar{x}_{d_2k} \right\| ^2 \right] 
\\
&\leqslant 4\mathbb{E} \left[ \left\| X_{d_2k}-\mathbf{1}\bar{x}_{d_2k} \right\| ^2 \right] +4d_{2}^{2}\gamma ^2\mathbb{E} \left[ \left\| Y_{d_2k}-\mathbf{1}\bar{y}_k \right\| ^2 \right] 
\\
&+16nd_{2}^{2}\gamma ^2L\mathbb{E} \left[ f\left( \bar{x}_{d_2k} \right) -f\left( x^* \right) \right] +d_{2}^{2}\gamma ^2\left( 4\sigma ^2+8n\sigma ^2 \right).
\end{aligned}
\end{equation}
\end{Lem}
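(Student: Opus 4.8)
The plan is to bound the client divergence $\left\|X_{d_2k+t}-\mathbf{1}\bar{x}_{d_2k}\right\|^2$ during the inner loop (the $d_2$ local gradient updates with no communication). Since no communication happens within the inner loop, the relevant recursion comes purely from the update $X_{d_2k+l+1}=X_{d_2k+l}-\gamma Y_{d_2k+l}$, which gives $X_{d_2k+t}=X_{d_2k}-\gamma\sum_{j=0}^{t-1}Y_{d_2k+j}$. Subtracting $\mathbf{1}\bar{x}_{d_2k}$ and applying the triangle/Young inequality, I would split into $\left\|X_{d_2k}-\mathbf{1}\bar{x}_{d_2k}\right\|^2$ and $\gamma^2\|\sum_{j}Y_{d_2k+j}\|^2$, the latter handled via Cauchy--Schwarz across the $t\le d_2$ summands, reducing everything to bounding $\left\|Y_{d_2k+j}\right\|^2$ (or its deviation from $\mathbf{1}\bar y$) for $j\in[0,d_2-1]$.

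The core of the argument is therefore an auxiliary bound on $\mathbb{E}\left[\left\|Y_{d_2k+j}-\mathbf{1}\bar{y}_{d_2k+j}\right\|^2\right]$ (and the mean gradient norm) over the inner loop. I would expand the $y$-update $y_{i,d_2k+l+1}=\nabla f_i(x_{i,d_2k+l+1};\xi)-\nabla f_i(x_{i,d_2k+l};\xi)+y_{i,d_2k+l}$; since the gradient-tracking correction telescopes, $Y$ inside the inner loop equals $Y_{d_2k}$ shifted by stochastic gradient increments, which by $L$-smoothness are controlled by the successive $x$-increments $\gamma Y$. This produces a self-referential recursion of the form $a_{l+1}\le (1+\text{small})a_l + (\text{stuff involving } \left\|X_{d_2k}-\mathbf{1}\bar x_{d_2k}\right\|^2, f(\bar x_{d_2k})-f(x^*), \sigma^2)$; unrolling it over at most $d_2$ steps and using Lemma~\ref{Lem_tech_1} (Bernoulli) with $\gamma\le\frac{1}{8d_2L}$ to keep the amplification factor $(1+cL\gamma)^{d_2}=O(1)$ yields a uniform-in-$l$ bound. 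The optimality-gap term appears because $\|\nabla f_i(x_{i,d_2k})\|^2$ is bounded, after an $L$-smoothness/averaging step, by $L(f(\bar x_{d_2k})-f(x^*))$ plus consensus and variance terms; the $\sigma^2$ and $n\sigma^2$ pieces come from the stochastic oracle noise (per-node variance $\sigma^2$ summed over $n$ nodes, plus the noise entering through the tracking variable).

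Having established that $\mathbb{E}\left[\left\|Y_{d_2k+j}-\mathbf{1}\bar y_{d_2k}\right\|^2\right]$ and $\mathbb{E}\left[\left\|\mathbf{1}\bar y_{d_2k}\right\|^2\right]$ are both bounded by a constant times $\left\|X_{d_2k}-\mathbf{1}\bar x_{d_2k}\right\|^2 + d_2^2\gamma^2\left\|Y_{d_2k}-\mathbf{1}\bar y_{d_2k}\right\|^2 + nd_2^2\gamma^2 L(f(\bar x_{d_2k})-f(x^*)) + d_2^2\gamma^2\sigma^2$ terms (with the right powers of $d_2\gamma$), I would substitute back into the Cauchy--Schwarz bound for $\gamma^2\|\sum_{j=0}^{t-1}Y_{d_2k+j}\|^2\le t\gamma^2\sum_{j}\|Y_{d_2k+j}\|^2$, collect constants, and verify they match the stated coefficients $4,4d_2^2\gamma^2,16nd_2^2\gamma^2L$ and $d_2^2\gamma^2(4\sigma^2+8n\sigma^2)$.

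The main obstacle I anticipate is getting the inner-loop recursion for $Y$ to close cleanly: the tracking variable couples $x$- and $y$-deviations, so a naive bound loops $Y$ back onto itself with a factor that must be shown to be a genuine contraction-or-near-identity after $d_2$ steps. Managing the bookkeeping of $d_2$ powers and constants so that the step-size condition $\gamma\le\frac{1}{8d_2L}$ exactly absorbs the amplification — rather than a messier condition — is the delicate part; I would handle it by working with the worst-case index over the inner loop and applying Bernoulli's inequality (Lemma~\ref{Lem_tech_1}) once at the end rather than term by term.
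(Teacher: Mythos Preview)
Your proposal is sound and identifies every ingredient that the paper uses—the telescoping identity $Y_{d_2k+j}=Y_{d_2k}+\nabla G_{d_2k+j}-\nabla G_{d_2k}$ inside the inner loop, the $L$-smoothness bound on the gradient increment, the decomposition of $\left\|\bar y_{d_2k}\right\|^2$ into consensus error, optimality gap, and variance, and the use of Lemma~\ref{Lem_tech_1} under $\gamma\le 1/(8d_2L)$ to cap the geometric amplification—but it is organized differently from the paper's proof. You first unroll $X_{d_2k+t}=X_{d_2k}-\gamma\sum_{j<t}Y_{d_2k+j}$ completely, apply Cauchy--Schwarz to the sum, and then set up a separate self-referential bound on the $Y$-terms. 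The paper instead builds a \emph{single} one-step recursion directly on $\left\|X_{d_2k+t}-\mathbf{1}\bar x_{d_2k}\right\|^2$: from $X_{d_2k+t}=X_{d_2k+t-1}-\gamma Y_{d_2k+t-1}$ it applies Young's inequality with parameter $\beta=1/(d_2-1)$, immediately substitutes the telescoped form of $Y_{d_2k+t-1}$, and uses smoothness to bound $\left\|\nabla F_{d_2k+t-1}-\nabla F_{d_2k}\right\|^2$ by an $L^2$-multiple of the same $X$-divergence, so the gradient increment feeds straight back into the recursion rather than into a secondary one on $Y$. The resulting amplification factor is $\bigl(1+\tfrac{1}{d_2-1}+4d_2\gamma^2L^2\bigr)^{t}\le e^{1+1/16}<3$; note that the dominant growth comes from the Young parameter $(1+1/(d_2-1))^{d_2-1}\approx e$, not from a factor of the shape $(1+cL\gamma)^{d_2}$ as you anticipate. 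Your two-stage route would also close (the self-referential prefactor on the $Y$-side is $O(d_2^2\gamma^2L^2)\le 1/64$, which is absorbable), but the paper's single recursion on the $X$-divergence is more economical and makes the constant-tracking to $4,\,4d_2^2\gamma^2,\,16nd_2^2\gamma^2L,\,d_2^2\gamma^2(4\sigma^2+8n\sigma^2)$ cleaner.
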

\begin{proof}
By the x-update of FlexGT \eqref{Eq_x_update}, noticing that $
Y_{d_2k+t}=Y_{d_2k}+\nabla G_{d_2k+t}-\nabla G_{d_2k}
$, we have
\begin{equation}
\begin{aligned}
&\mathbb{E} \left[ \left\| X_{d_2k+t}-\mathbf{1}\bar{x}_{d_2k} \right\| ^2 \right] 
\\
&\overset{\left( a \right)}{\leqslant}\left( 1+\beta \right) \mathbb{E} \left[ \left\| X_{d_2k+t-1}-\mathbf{1}\bar{x}_{d_2k} \right\| ^2 \right] 
\\
&+\left( 1+\frac{1}{\beta} \right) \gamma ^2\mathbb{E} \left[ \left\| Y_{d_2k}+\nabla G_{d_2k+t-1}-\nabla G_{d_2k} \right\| ^2 \right] 
\\
&\overset{\left( b \right)}{\leqslant}\left( 1+\beta \right) \mathbb{E} \left[ \left\| X_{d_2k+t-1}-\mathbf{1}\bar{x}_{d_2k} \right\| ^2 \right] +8\left( 1+\frac{1}{\beta} \right) \gamma ^2n\sigma ^2
\\
&+4\left( 1+\frac{1}{\beta} \right) \gamma ^2\left( \mathbb{E} \left[ \left\| Y_{d_2k}-\mathbf{1}\bar{y}_k \right\| ^2 \right] +\mathbb{E} \left[ \left\| \mathbf{1}\bar{y}_k \right\| ^2 \right] \right) 
\\
&+4\left( 1+\frac{1}{\beta} \right) \gamma ^2\mathbb{E} \left[ \left\| \nabla F_{d_2k+t-1}-\nabla F_{d_2k} \right\| ^2 \right] 
\\
&\overset{\left( c \right)}{\leqslant}\left( 1+\beta +4\left( 1+\frac{1}{\beta} \right) \gamma ^2L^2 \right) ^t\mathbb{E} \left[ \left\| X_{d_2k}-\mathbf{1}\bar{x}_{d_2k} \right\| ^2 \right] 
\\
&+4t\left( 1+\frac{1}{\beta} \right) \gamma ^2\mathbb{E} \left[ \left\| Y_{d_2k}-\mathbf{1}\bar{y}_k \right\| ^2 \right] 
\\
&+4t\left( 1+\frac{1}{\beta} \right) n\gamma ^2\mathbb{E} \left[ \left\| \bar{y}_k \right\| ^2 \right] +8t\left( 1+\frac{1}{\beta} \right) \gamma ^2n\sigma ^2,
\end{aligned}
\end{equation}
where we have used Young's inequality with parameter $\beta$ in $(a)$, the bounded stochastic variance of Assumption \ref{Ass_bounded_var} in $(b)$ and the smoothness of $f_i$ of Assumption \ref{Ass_joint_smo_cov} in $(c)$. Then, by Lemma \ref{Lem_tech_1} and letting $\beta =1/(d_2-1)$, $\gamma \leqslant 1/(8d_2L)$, we get
\begin{equation}
\begin{aligned}
&\left( 1+\frac{1}{d_2-1}+4d_2L^2\gamma ^2 \right) ^{t}
\leqslant e^{1+1/16}<3.
\end{aligned}
\end{equation}
Noticing that 
\begin{equation}
\begin{aligned}
\mathbb{E} \left[ \left\| \bar{y}_k \right\| ^2 \right] &\leqslant \frac{2L^2}{n}\left[ \left\| X_{d_2k}-\mathbf{1}\bar{x}_{d_2k} \right\| ^2 \right] 
\\
&+4L\mathbb{E} \left[ f\left( \bar{x}_{d_2k} \right) -f\left( x^* \right) \right] +\frac{\sigma ^2}{n},
\end{aligned}
\end{equation}
and $t < d_2$, we complete the proof.
\end{proof}

\begin{Lem}[\textbf{Bounding tracking error within a period}]\label{Lem_inner_tra_err}
Suppose Assumptions \ref{Ass_joint_smo_cov}, \ref{Ass_bounded_var} and \ref{Ass_graph} hold. Let the stepsize satisfy $\gamma \leqslant \frac{1}{8d_2L}$. Then,
we have for $k\geqslant 0$ and integer $t\in \left[ 1,d_2-1 \right]$, $d_2\geqslant2$, 
\begin{equation}
\begin{aligned}
&\mathbb{E} \left[ \left\| Y_{d_2k+t}-\mathbf{1}\bar{y}_{d_2k+t} \right\| ^2 \right] 
\\
&\leqslant 3\mathbb{E} \left[ \left\| Y_{d_2k}-\mathbf{1}\bar{y}_{d_2k} \right\| ^2 \right] +16L^2\mathbb{E} \left[ \left\| X_{d_2k}-\mathbf{1}\bar{x}_{d_2k} \right\| ^2 \right] 
\\
&+96nd_{2}^{2}\gamma ^2L^3\mathbb{E} \left[ f\left( \bar{x}_{d_2k} \right) -f\left( x^* \right) \right] +9n\sigma ^2.
\end{aligned}
\end{equation}
\end{Lem}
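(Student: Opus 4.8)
The plan is to exploit the telescoping structure of the $y$-recursion inside the inner (computation) loop, \emph{before} any communication is performed. From the update in Algorithm~\ref{Alg_Flexible_DSGT}, within round $k$ we have $y_{i,d_2k+l+1}-y_{i,d_2k+l}=\nabla f_i(x_{i,d_2k+l+1};\xi_{i,d_2k+l+1})-\nabla f_i(x_{i,d_2k+l};\xi_{i,d_2k+l})$, so summing over $l=0,\dots,t-1$ (for $t\le d_2-1$ no mixing step has occurred yet) gives $Y_{d_2k+t}=Y_{d_2k}+\nabla G_{d_2k+t}-\nabla G_{d_2k}$. Applying the orthogonal projection $\mathbf{I}-\mathbf{J}$ and using $\mathbf{J}Y_s=\mathbf{1}\bar y_s$ yields
\begin{equation*}
Y_{d_2k+t}-\mathbf{1}\bar y_{d_2k+t}=\bigl(Y_{d_2k}-\mathbf{1}\bar y_{d_2k}\bigr)+(\mathbf{I}-\mathbf{J})\bigl(\nabla G_{d_2k+t}-\nabla G_{d_2k}\bigr).
\end{equation*}

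Next I would apply Young's inequality $\|a+b\|^2\le(1+\eta)\|a\|^2+(1+\tfrac1\eta)\|b\|^2$ with $\eta=2$, together with $\|\mathbf{I}-\mathbf{J}\|_2\le1$, which produces the contraction term $3\,\mathbb{E}\|Y_{d_2k}-\mathbf{1}\bar y_{d_2k}\|^2$ and leaves $\tfrac32\,\mathbb{E}\|\nabla G_{d_2k+t}-\nabla G_{d_2k}\|^2$ to be controlled. To bound the latter I would write $\nabla G_{d_2k+t}-\nabla G_{d_2k}=(\nabla G_{d_2k+t}-\nabla F_{d_2k+t})-(\nabla G_{d_2k}-\nabla F_{d_2k})+(\nabla F_{d_2k+t}-\nabla F_{d_2k})$. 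The first term is \emph{fresh} noise — conditionally mean zero given the history up to iteration $d_2k+t-1$ — hence orthogonal (in expectation) to the two remaining measurable terms and contributing at most $n\sigma^2$ by Assumption~\ref{Ass_bounded_var}; the stale noise $\nabla G_{d_2k}-\nabla F_{d_2k}$ is correlated with $Y_{d_2k}$ (the sample $\xi_{i,d_2k}$ is reused at the start of round $k$ by the gradient-tracking difference) and is absorbed with a crude $2ab$ split, again costing $\mathcal{O}(n\sigma^2)$; and the deterministic drift is controlled by $L$-smoothness (Assumption~\ref{Ass_joint_smo_cov}), $\mathbb{E}\|\nabla F_{d_2k+t}-\nabla F_{d_2k}\|^2\le L^2\mathbb{E}\|X_{d_2k+t}-X_{d_2k}\|^2\le 2L^2\mathbb{E}\|X_{d_2k+t}-\mathbf{1}\bar x_{d_2k}\|^2+2L^2\mathbb{E}\|X_{d_2k}-\mathbf{1}\bar x_{d_2k}\|^2$.

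The remaining quantity $\mathbb{E}\|X_{d_2k+t}-\mathbf{1}\bar x_{d_2k}\|^2$ is exactly what Lemma~\ref{Lem_tech_client_diver} bounds: plugging in its estimate converts the drift into a combination of the consensus error $\|X_{d_2k}-\mathbf{1}\bar x_{d_2k}\|^2$, the tracking error $\|Y_{d_2k}-\mathbf{1}\bar y_{d_2k}\|^2$ (the latter carrying an extra $d_2^2\gamma^2L^2$), the optimality gap $f(\bar x_{d_2k})-f(x^*)$, and a further $\sigma^2$ term. Finally, the stepsize bound $\gamma\le 1/(8d_2L)$, i.e. $d_2^2\gamma^2L^2\le 1/64$, lets every higher-order term be absorbed either into the leading $3\,\mathbb{E}\|Y_{d_2k}-\mathbf{1}\bar y_{d_2k}\|^2$ or into the $\sigma^2$ budget, and collecting the surviving coefficients gives the claimed $3$, $16L^2$, $96nd_2^2\gamma^2L^3$ and $9n\sigma^2$. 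I expect the main obstacle to be the stochastic bookkeeping in the middle step — correctly separating the conditionally mean-zero contribution from the ones correlated with $Y_{d_2k}$, and choosing the Young parameter and auxiliary splittings so that the accumulated constants \emph{match} (rather than merely are bounded by) those in the statement; once Lemma~\ref{Lem_tech_client_diver} is in hand the rest is routine algebra.
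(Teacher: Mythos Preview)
Your approach coincides with the paper's: telescope $Y_{d_2k+t}=Y_{d_2k}+\nabla G_{d_2k+t}-\nabla G_{d_2k}$, split off the gradient increment, control the deterministic drift via $L$-smoothness and Lemma~\ref{Lem_tech_client_diver}, and absorb the higher-order terms using $d_2^2\gamma^2L^2\le 1/64$. The only hitch is your choice of Young parameter. The paper uses the symmetric split $\|a+b\|^2\le 2\|a\|^2+2\|b\|^2$ (i.e.\ your $\eta=1$), which leaves the tracking-error coefficient at $2$ \emph{before} absorbing the $O(d_2^2\gamma^2L^2)\,\mathbb{E}\|Y_{d_2k}-\mathbf{1}\bar y_{d_2k}\|^2$ that is fed back through Lemma~\ref{Lem_tech_client_diver}; with $\eta=2$ you already sit at $3$, so that absorption overshoots the stated constant rather than landing on it. Similarly, the paper passes directly from the drift to $4L^2\,\mathbb{E}\|X_{d_2k+t}-\mathbf{1}\bar x_{d_2k}\|^2$ (without your intermediate $\|X_{d_2k+t}-X_{d_2k}\|^2$ split), and Lemma~\ref{Lem_tech_client_diver}'s leading factor $4$ then gives exactly $16L^2$. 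These are tuning choices, not structural differences --- the skeleton of your argument matches the paper's.
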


\begin{proof}
By the y-update of FlexGT \eqref{Eq_y_update}, noticing that $
Y_{d_2k+t}=Y_{d_2k}+\nabla G_{d_2k+t}-\nabla G_{d_2k}
$, we have
\begin{equation}
\begin{aligned}
&\mathbb{E} \left[ \left\| Y_{d_2k+t}-\mathbf{1}\bar{y}_{d_2k+t} \right\| ^2 \right] 
\\
&\leqslant 2\mathbb{E} \left[ \left\| Y_{d_2k}-\mathbf{1}\bar{y}_{d_2k} \right\| ^2 \right] +4\mathbb{E} \left[ \left\| \nabla F_{d_2k+t}-\nabla F_{d_2k} \right\| ^2 \right] 
\\
&+4\mathbb{E} \left[ \left\| \nabla G_{d_2k+t}-\nabla F_{d_2k+t}+\nabla F_{d_2k}-\nabla G_{d_2k} \right\| ^2 \right] 
\\
&\leqslant 2\mathbb{E} \left[ \left\| Y_{d_2k}-\mathbf{1}\bar{y}_{d_2k} \right\| ^2 \right] 
\\
&+4L^2\underset{=:S_1}{\underbrace{\mathbb{E} \left[ \left\| X_{d_2k+t}-\mathbf{1}\bar{x}_{d_2k} \right\| ^2 \right] }}+8n\sigma ^2.
\end{aligned}
\end{equation}
Using Lemma \ref{Lem_tech_client_diver} to further bound $S_1$, 
and letting the stepsize $\gamma \leqslant \frac{1}{8d_2L}$, we complete the proof.
\end{proof}

\begin{Lem}[\textbf{Bounding consensus error within a period}]\label{Lem_inner_cons_err}
Suppose Assumptions \ref{Ass_joint_smo_cov}, \ref{Ass_bounded_var} and \ref{Ass_graph} hold. Let the stepsize satisfy $\gamma \leqslant \frac{1}{8d_2L}$. Then, 
we have for $k\geqslant 0$ and integer $t \in \left[ 1,d_2-1 \right] $, $d_2\geqslant2$, 
\begin{equation}
\begin{aligned}
&\mathbb{E} \left[ \left\| X_{d_2k+t}-\mathbf{1}\bar{x}_{d_2k+t} \right\| ^2 \right] 
\\
&\leqslant 3\mathbb{E} \left[ \left\| X_{d_2k}-\mathbf{1}\bar{x}_{d_2k} \right\| ^2 \right] +3d_{2}^{2}\gamma ^2\mathbb{E} \left[ \left\| Y_{d_2k}-\mathbf{1}\bar{y}_{d_2k} \right\| ^2 \right] 
\\
&+128nd_{2}^{4}\gamma ^4L^3\mathbb{E} \left[ f\left( \bar{x}_{d_2k} \right) -f\left( x^* \right) \right] +9d_{2}^{2}\gamma ^2n\sigma ^2.
\end{aligned}
\end{equation}
\end{Lem}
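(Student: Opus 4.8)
The plan is to mimic the proofs of Lemmas~\ref{Lem_tech_client_diver} and~\ref{Lem_inner_tra_err}: since no averaging takes place inside a period, one can unroll the local iterations, project onto the consensus-orthogonal subspace, and separate an ``initial-state'' contribution from an ``accumulated-drift'' contribution. Concretely, the local $x$-update in Algorithm~\ref{Alg_Flexible_DSGT} gives $X_{d_2k+t}=X_{d_2k}-\gamma\sum_{j=0}^{t-1}Y_{d_2k+j}$, while telescoping the local $y$-update (the identity $Y_{d_2k+j}=Y_{d_2k}+\nabla G_{d_2k+j}-\nabla G_{d_2k}$ already used in Lemma~\ref{Lem_tech_client_diver}) lets us eliminate the intermediate $Y$'s. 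Substituting and using $X_{d_2k+t}-\mathbf 1\bar x_{d_2k+t}=(\mathbf I-\mathbf J)X_{d_2k+t}$ together with $(\mathbf I-\mathbf J)\mathbf 1=0$, I would first write
\begin{equation*}
X_{d_2k+t}-\mathbf 1\bar x_{d_2k+t}=(\mathbf I-\mathbf J)X_{d_2k}-\gamma t\,(\mathbf I-\mathbf J)Y_{d_2k}-\gamma\,(\mathbf I-\mathbf J)\sum_{j=0}^{t-1}\left(\nabla G_{d_2k+j}-\nabla G_{d_2k}\right).
\end{equation*}
The structural point is that $(\mathbf I-\mathbf J)$ annihilates the mean-gradient component $\mathbf 1\bar y_{d_2k}$ of $Y_{d_2k}$, leaving only the tracking error $Y_{d_2k}-\mathbf 1\bar y_{d_2k}$; this is the reason the optimality-gap term in the claimed bound carries four powers of $\gamma L$ rather than the two in Lemma~\ref{Lem_tech_client_diver} --- it is generated solely by the $O(\gamma)$-small within-period gradient drift.

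Next I would apply Young's inequality to the first term against the rest (with the Young parameter left free for the moment) and bound the remaining two terms using the non-expansiveness $\|(\mathbf I-\mathbf J)Z\|\le\|Z\|$, the identity $(\mathbf I-\mathbf J)Y_{d_2k}=Y_{d_2k}-\mathbf 1\bar y_{d_2k}$, and $t\le d_2$. This produces a contribution $3\,\mathbb E\|X_{d_2k}-\mathbf 1\bar x_{d_2k}\|^2+3d_2^2\gamma^2\,\mathbb E\|Y_{d_2k}-\mathbf 1\bar y_{d_2k}\|^2$, with the Young parameter finally pinned down so that the (small) residual contributions generated by the third term fit underneath these two coefficients; this is exactly where the hypothesis $\gamma\le\frac{1}{8d_2L}$ is invoked, since it forces those residuals to be of order $\gamma^2d_2^2L^2\le\frac1{64}$.

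The main obstacle is the accumulated gradient-difference term $\gamma^2\,\mathbb E\big\|(\mathbf I-\mathbf J)\sum_{j=0}^{t-1}(\nabla G_{d_2k+j}-\nabla G_{d_2k})\big\|^2\le\gamma^2\,\mathbb E\big\|\sum_{j=0}^{t-1}(\nabla G_{d_2k+j}-\nabla G_{d_2k})\big\|^2$. I would split each increment into its conditional mean --- the deterministic drift $\nabla F_{d_2k+j}-\nabla F_{d_2k}$ --- and a centered stochastic perturbation. For the stochastic part, the perturbations at distinct local steps arise from fresh oracle queries, hence are conditionally mean-zero given the past, so the cross terms vanish (as in the proof of Lemma~\ref{Lem_tech_client_diver}) and the second moment of the partial sum telescopes to $O(d_2n\sigma^2)$; after the $\gamma^2$ prefactor and $t\le d_2$ this gives the $9d_2^2\gamma^2n\sigma^2$ term. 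For the deterministic part, $L$-smoothness gives $\|\nabla F_{d_2k+j}-\nabla F_{d_2k}\|^2\le L^2\|X_{d_2k+j}-X_{d_2k}\|^2$, and I would bound $\|X_{d_2k+j}-X_{d_2k}\|^2\le 2\|X_{d_2k+j}-\mathbf 1\bar x_{d_2k}\|^2+2\|X_{d_2k}-\mathbf 1\bar x_{d_2k}\|^2$ using Lemma~\ref{Lem_tech_client_diver} (alternatively one may route through $\|Y_{d_2k+i}\|^2$ and Lemma~\ref{Lem_inner_tra_err} at the cost of an extra a priori estimate for $\|\bar y\|^2$); crucially every term produced this way inherits the prefactor $\gamma^2t^2L^2\le\gamma^2d_2^2L^2\le\frac1{64}$, so its backreaction onto $\|X_{d_2k}-\mathbf 1\bar x_{d_2k}\|^2$ and $d_2^2\gamma^2\|Y_{d_2k}-\mathbf 1\bar y_{d_2k}\|^2$ is absorbed into the leading $3$'s of the previous step, while its optimality-gap part --- carrying two extra powers of $\gamma L$ --- accumulates into $128nd_2^4\gamma^4L^3\,\mathbb E[f(\bar x_{d_2k})-f(x^*)]$. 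Assembling these estimates with $\gamma\le\frac1{8d_2L}$ yields the stated inequality. I expect the genuinely delicate parts to be (a) the martingale/independence bookkeeping for the noise sum and (b) checking that all the $\gamma^2d_2^2L^2$-weighted residuals, together with the $3$ from the Young split, still fit under the coefficients on the right-hand side of the statement.
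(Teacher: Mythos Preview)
Your argument is sound, but it follows a genuinely different route from the paper's. The paper does \emph{not} unroll the $X$-recursion in one shot; instead it writes a one-step recursion
\[
\mathbb{E}\big[\|X_{d_2k+t}-\mathbf 1\bar x_{d_2k+t}\|^2\big]
\le\Big(1+\tfrac{1}{d_2-1}\Big)\mathbb{E}\big[\|X_{d_2k+t-1}-\mathbf 1\bar x_{d_2k+t-1}\|^2\big]
+d_2\gamma^2\,\mathbb{E}\big[\|Y_{d_2k+t-1}-\mathbf 1\bar y_{d_2k+t-1}\|^2\big],
\]
then plugs in Lemma~\ref{Lem_inner_tra_err} (the within-period tracking-error bound) for every $\|Y_{d_2k+j}-\mathbf 1\bar y_{d_2k+j}\|^2$, iterates, and closes with Bernoulli (Lemma~\ref{Lem_tech_1}) to control $(1+\tfrac{1}{d_2-1})^t$. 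So the paper routes through Lemma~\ref{Lem_inner_tra_err}, whereas you bypass it and go directly to Lemma~\ref{Lem_tech_client_diver} after telescoping $Y_{d_2k+j}$. Your unrolled decomposition makes it structurally transparent why the optimality-gap contribution is $O(\gamma^4L^3)$ --- the projection $(\mathbf I-\mathbf J)$ kills the $\mathbf 1\bar y_{d_2k}$ component of $Y_{d_2k}$, so only the $O(\gamma)$ gradient drift feeds that term --- but the price is the more delicate constant-matching you flag in (b): you must choose the Young parameter so that the $O(\gamma^2d_2^2L^2)$ backreaction from the drift onto both $\|X_{d_2k}-\mathbf 1\bar x_{d_2k}\|^2$ and $d_2^2\gamma^2\|Y_{d_2k}-\mathbf 1\bar y_{d_2k}\|^2$ fits under the stated coefficient $3$. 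The paper's recursive approach sidesteps this bookkeeping because the residual is automatically suppressed by the Bernoulli bound, at the cost of needing Lemma~\ref{Lem_inner_tra_err} as an input.
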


\begin{proof}
Using Young's inequality, we obtain
\begin{equation}
\begin{aligned}
&\mathbb{E} \left[ \left\| X_{d_2k+t}-\mathbf{1}\bar{x}_{d_2k+t} \right\| ^2 \right] 
\\
&\leqslant \left( 1+\frac{1}{d_2-1} \right) \mathbb{E} \left[ \left\| X_{d_2k+t-1}-\mathbf{1}\bar{x}_{d_2k+t-1} \right\| ^2 \right] 
\\
&+d_2\gamma ^2\mathbb{E} \left[ \left\| Y_{d_2k+t-1}-\mathbf{1}\bar{y}_{d_2k+t-1} \right\| ^2 \right] 
\\
&\overset{\left( a \right)}{\leqslant}\left( \left( 1+\frac{1}{d_2-1} \right) ^t+16d_{2}^{2}\gamma ^2L^2 \right) \mathbb{E} \left[ \left\| X_{d_2k}-\mathbf{1}\bar{x}_{d_2k} \right\| ^2 \right] 
\\
&+3d_{2}^{2}\gamma ^2\mathbb{E} \left[ \left\| Y_{d_2k}-\mathbf{1}\bar{y}_{d_2k} \right\| ^2 \right] +9d_{2}^{2}\gamma ^2n\sigma ^2
\\
&+96nd_{2}^{4}\gamma ^4L^3\mathbb{E} \left[ f\left( \bar{x}_{d_2k} \right) -f\left( x^* \right) \right],
\end{aligned}
\end{equation}
wherein inequality (a) we have used Lemma \ref{Lem_inner_tra_err}. Then, using Lemma \ref{Lem_tech_1} and letting $\gamma \leqslant \frac{1}{8d_2L}$, we complete the proof.
\end{proof}

\subsection{Missing Proofs in the Main-text}

\subsubsection{Proof of Lemma \ref{Lem_opt_gap}}
\label{Proof_of_lem_opt_gap}
Using the x-update \eqref{Eq_x_update}, we have
\begin{equation}\label{Proof_lem1_1}
\begin{aligned}
&\mathbb{E} \left[ \left\| \bar{x}_{d_2\left( k+1 \right)}-x^* \right\| ^2 \right] 
\\
&=\mathbb{E} \left[ \left\| \bar{x}_{d_2k}-x^* \right\| ^2 \right]
+\gamma ^2\underset{=:S_2}{\underbrace{\mathbb{E} \left[ \left\| \sum_{j=0}^{d_2-1}{\frac{\mathbf{1}^T}{n}Y_{d_2k+j}} \right\| ^2 \right] }}
\\
&-2\gamma \underset{=:S_3}{\underbrace{\mathbb{E} \left[ \left< \bar{x}_{d_2k}-x^*,\sum_{j=0}^{d_2-1}{\frac{\mathbf{1}^T}{n}\nabla F_{d_2k+j}} \right> \right] }}.
\end{aligned}
\end{equation}

Next, we bound the terms $S_2$ and $S_3$ in the above equation respectively.
For $S_2$, noticing that $\mathbf{1}^TY_{d_2k+j}=\mathbf{1}^T\nabla G_{d_2k+j}$,
we have
\begin{equation}
\begin{aligned}
&\mathbb{E} \left[ \left\| \sum_{j=0}^{d_2-1}{\frac{\mathbf{1}^T}{n}Y_{d_2k+j}} \right\| ^2 \right] 
\\
&\overset{\left( a \right)}{\leqslant}\frac{d_2\sigma ^2}{n}+\mathbb{E} \left[ \left\| \sum_{j=0}^{d_2-1}{\frac{\mathbf{1}^T}{n}\nabla F_{d_2k+j}} \right\| ^2 \right] 
\\
&\overset{\left( b \right)}{\leqslant}\frac{2d_2L^2}{n}\underset{=:S_4}{\underbrace{\sum_{j=0}^{d_2-1}{\mathbb{E} \left[ \left\| X_{d_2k+j}-\mathbf{1}\bar{x}_{d_2k} \right\| ^2 \right]}}}
\\
&+2d_{2}^{2}\mathbb{E} \left[ \left\| \nabla f\left( \bar{x}_{d_2k} \right) \right\| ^2 \right] +\frac{d_2\sigma ^2}{n}
\\
&\leqslant \frac{2d_2L^2}{n}S_4+4d_{2}^{2}L\left( f\left( \bar{x}_{d_2k} \right) -f\left( x^* \right) \right) +\frac{d_2\sigma ^2}{n},
\end{aligned}
\end{equation}
wherein the inequality $(a)$ we have used Assumption \ref{Ass_bounded_var}, and $(b)$ used the smoothness of $f_i$.
For $S_3$, using the convexity and smoothness of $f_i$ in Assumption \ref{Ass_joint_smo_cov}, we have
\begin{equation}
\begin{aligned}
&\mathbb{E} \left[ \left< \bar{x}_{d_2k}-x^*,\sum_{j=0}^{d_2-1}{\frac{\mathbf{1}^T}{n}\nabla F_{d_2k+j}} \right> \right] 
\\
&=\sum_{j=0}^{d_2-1}{\frac{1}{n}\sum_{i=1}^n{\mathbb{E} \left[ \left< x_{i,d_2k+j}-x^*,\nabla f_i\left( x_{i,d_2k+j} \right) \right> \right]}}
\\
&-\sum_{j=0}^{d_2-1}{\frac{1}{n}\sum_{i=1}^n{\mathbb{E} \left[ \left< x_{i,d_2k+j}-\bar{x}_{d_2k},\nabla f_i\left( x_{i,d_2k+j} \right) \right> \right]}}
\\
&\geqslant d_2\mathbb{E} \left[ f\left( \bar{x}_{d_2k} \right) -f\left( x^* \right) \right]
\\
&+\frac{\mu}{2n}\sum_{j=0}^{d_2-1}{\mathbb{E} \left[ \left\| X_{d_2k+j}-\mathbf{1}x^* \right\| ^2 \right]}-\frac{L}{2n}S_4.
\end{aligned}
\end{equation}

Substituting $S_2$ and $S_3$ in \eqref{Proof_lem1_1}, and noticing that
\begin{equation}
\begin{aligned}
&-\sum_{j=0}^{d_2-1}{\mathbb{E} \left[ \left\| X_{d_2k+j}-\mathbf{1}x^* \right\| ^2 \right]}
\\
&=-\sum_{j=0}^{d_2-1}{\mathbb{E} \left[ \left\| X_{d_2k+j}-\mathbf{1}\bar{x}_{d_2k} \right\| ^2 \right]}-n\mathbb{E} \left[ \left\| \bar{x}_{d_2k}-x^* \right\| ^2 \right] 
\\
&-2\sum_{j=0}^{d_2-1}{\sum_{i=1}^n{\left( \mathbb{E} \left[ \left< x_{i,d_2k+j}-\bar{x}_{d_2k},\bar{x}_{d_2k}-x^* \right> \right] \right)}}
\\
&\leqslant S_4-\frac{nd_2}{2}\mathbb{E} \left[ \left\| \bar{x}_{d_2k}-x^* \right\| ^2 \right] ,
\end{aligned}
\end{equation}
we get
\begin{equation}
\begin{aligned}
&\mathbb{E} \left[ \left\| \bar{x}_{d_2\left( k+1 \right)}-x^* \right\| ^2 \right] 
\leqslant \left( 1-\frac{d_2\mu \gamma}{2} \right) \mathbb{E} \left[ \left\| \bar{x}_{d_2k}-x^* \right\| ^2 \right] 
\\
&+\frac{2\gamma L\left( 1+d_2\gamma L \right)}{n}S_4+\frac{d_2\gamma ^2\sigma ^2}{n}
\\
&-\left( 2d_2\gamma -4d_{2}^{2}\gamma ^2L \right) \mathbb{E} \left[ f\left( \bar{x}_{d_2k} \right) -f\left( x^* \right) \right].
\end{aligned}
\end{equation}
We note that it is only necessary to bound $S_4$ with Lemma \ref{Lem_tech_client_diver} if $d_2\geqslant2$. Then, letting the stepsize satisfy $\gamma \leqslant \frac{1}{8d_2L}$, we complete the proof.

\subsubsection{Proof of Lemma \ref{Lem_cons_err}}
\label{Proof_of_lem_con_err}
Using the x-update \eqref{Eq_x_update}, we have
\begin{equation}
\begin{aligned}
&\mathbb{E} \left[ \left\| X_{d_2\left( k+1 \right)}-\mathbf{1}\bar{x}_{d_2\left( k+1 \right)} \right\| ^2 \right] 
\\
&\leqslant \frac{1+\rho _{W}^{d_1}}{2}\mathbb{E} \left[ \left\| X_{d_2k}-\mathbf{1}\bar{x}_{d_2k} \right\| ^2 \right] 
\\
&+\frac{\gamma ^2d_2\left( 1+\rho _{W}^{d_1} \right) \rho _{W}^{d_1}}{1-\rho _{W}^{d_1}}\sum_{t=0}^{d_2-1}{\mathbb{E} \left[ \left\| Y_{d_2k+t}-\mathbf{1}\bar{y}_{d_2k+t} \right\| ^2 \right]},
\end{aligned}
\end{equation}
where we have used Young's inequality. By Lemma \ref{Lem_inner_tra_err} with $d_2\geqslant2$, letting the stepsize satisfy \eqref{Eq_step_cons_err}, we complete the proof.

\subsubsection{Proof of Lemma \ref{Lem_tra_err}}
\label{Proof_of_lem_tra_err}
Applying the recursion of FlexGT \eqref{Eq_y_update}, by Assumption \ref{Ass_bounded_var}, we have
\begin{equation}
\begin{aligned}
&\mathbb{E} \left[ \left\| Y_{d_2\left( k+1 \right)}-\mathbf{1}\bar{y}_{d_2\left( k+1 \right)} \right\| ^2 \right] 
\\
&\overset{\left( a \right)}{\leqslant}\frac{1+\rho _{W}^{d_1}}{2}\mathbb{E} \left[ \left\| Y_{d_2k}-\mathbf{1}\bar{y}_{d_2k} \right\| ^2 \right] +2n\rho _{W}^{d_1}\sigma ^2
\\
&+\frac{\left( 1+\rho _{W}^{d_1} \right) \rho _{W}^{d_1}}{1-\rho _{W}^{d_1}}\mathbb{E} \left[ \left\| \nabla F_{d_2\left( k+1 \right)}-\nabla F_{d_2k} \right\| ^2 \right] 
\\
&+2\rho _{W}^{d_1}\mathbb{E} \left[ \left< Y_{d_2k}-\mathbf{1}\bar{y}_{d_2k}, \nabla F_{d_2k}-\nabla G_{d_2k} \right> \right] 
\\
&+2\rho _{W}^{d_1}\mathbb{E} \left[ \left< \nabla F_{d_2k}-\nabla G_{d_2k}, \nabla F_{d_2\left( k+1 \right)} \right> \right] 
\\
&\overset{\left( b \right)}{\leqslant}\frac{1+\rho _{W}^{d_1}}{2}\mathbb{E} \left[ \left\| Y_{d_2k}-\mathbf{1}\bar{y}_{d_2k} \right\| ^2 \right] 
\\
&+\frac{6\rho _{W}^{d_1}L^2}{1-\rho _{W}^{d_1}}\mathbb{E} \left[ \left\| X_{d_2k}-\mathbf{1}\bar{x}_{d_2k} \right\| ^2 \right] 
\\
&+\frac{6\rho _{W}^{d_1}L^2}{1-\rho _{W}^{d_1}}\mathbb{E} \left[ \left\| X_{d_2\left( k+1 \right)}-\mathbf{1}\bar{x}_{d_2k} \right\| ^2 \right] +5n\rho _{W}^{d_1}\sigma ^2,
\end{aligned}
\end{equation}
wherein the inequality $(a)$ we have used Young's inequality and $\mathbb{E} \left[ \nabla G_{d_2k} \right] =\nabla F_{d_2k}$, and $(b)$ we have used Assumption \ref{Ass_joint_smo_cov} and the fact $\mathbb{E} \left[ \left< Y_{d_2k}-\mathbf{1}\bar{y}_{d_2k},\nabla F_{d_2k}-\nabla G_{d_2k} \right> \right] \leqslant n\sigma ^2$.
Then, adapting Lemma \ref{Lem_tech_client_diver} to the case $t=d_2\geqslant2$,
and letting the stepsize satisfy \eqref{Eq_step_tracking_err}, we complete the proof.
\balance
\bibliographystyle{ieeetr}
\bibliography{reference}
\end{document}